\numberwithin{equation}{section}
\theoremstyle{plain} 
\newtheorem{thm}{\indent\sc Theorem}[section] 
\newtheorem{lem}[thm]{\indent\sc Lemma}
\newtheorem{cor}[thm]{\indent\sc Corollary} 
\newtheorem{claim}[thm]{\indent\sc Claim}
\theoremstyle{definition}
\newtheorem{dfn}[thm]{\indent\sc Definition}
\newtheorem{exa[thm]}{\indent\sc Example}
\newtheorem{rem}[thm]{\indent\sc Remark}
\newtheorem{prob}[thm]{\indent\sc Problem}
\newcommand{\kah}{K\"{a}hler }
\newcommand{\deldel}{\sqrt{-1}\partial \overline{\partial}}
\begin{document}
\baselineskip=16pt

\title[Nadel-Nakano vanishing theorems of vector bundles with singular Hermitian metrics]
{Nadel-Nakano vanishing theorems of vector bundles with singular Hermitian metrics}
\author{Masataka Iwai}

\address{Graduate School of Mathematical Sciences, The University of Tokyo, 3-8-1 Komaba,
Tokyo, 153-8914, Japan.}

 \email{{\tt
masataka@ms.u-tokyo.ac.jp}}



\maketitle
\begin{abstract}
We study a singular Hermitian metric of a vector bundle.
First, we prove the sheaf of locally square integrable holomorphic sections of a vector bundle
with a singular Hermitian metric, which is a higher rank analogy of a multiplier ideal sheaf, 
is coherent under some assumptions.
Second, we prove a Nadel-Nakano type vanishing theorem 
of a vector bundle
with a singular Hermitian metric.
We do not use an approximation technique of a singular Hermitian metric. 
We apply these theorems to a singular Hermitian metric induced by holomorphic sections and
a big vector bundle,
and we obtain a generalization of Griffiths' vanishing theorem. 
Finally, we show a generalization of Ohsawa's vanishing theorem.
\end{abstract}

\section{Introduction}
The aim of this paper is to study the vanishing theorem of a vector bundle with a singular Hermitian metric.
Here is a brief history of a singular Hermitian metric of a vector bundle.
A singular Hermitian metric of a vector bundle is a higher rank analogy of
 a singular Hermitian metric of a line bundle. 
The origin is due to de Cataldo \cite{Cat98}.
After that, Berndtsson and P\u{a}un defined a singular Hermitian metric of a vector bundle in the different way in \cite{BP}.
We adopt the definition of a singular Hermitian metric of a vector bundle in \cite{BP}.
They also defined the notions of a positivity of a singular Hermitian metric,
called {\it positively curved}.
In \cite{PT14}, P\u{a}un and Takayama proved that 
a direct image sheaf of
an $m$-th relative canonical line bundle $f_{*}(m K_{X/Y})$
can be endowed with a positively curved singular Hermitian metric
for any fibration $f \colon X \rightarrow Y$.
Recently by using this result, Cao and P\u{a}un \cite{CP} proved Iitaka conjecture 
when the base space is an Abelian variety. 
For more details, we refer the reader to \cite{Paun16}.

Although a singular Hermitian metric of a vector bundle 
was investigated in many papers
(for example \cite{BP}, \cite{PT14}, \cite{Hos17}, \cite{HPS17}, \cite{Raufi}, etc.),
there exist few results of vanishing theorems of vector bundles with singular Hermitian metrics.
We explain the details below.
Let $( X , \omega) $ be a compact \kah manifold and $( E, h )$ 
be a vector bundle with a singular Hermitian metric.
In \cite{Cat98},
{\it the sheaf of locally square integrable holomorphic sections of E with respect to h},
denoted by $E(h)$, is defined to be 
$$
E(h)_x = \{ f_x \in E_{(x)}
  \colon  |f_x |_{h}^2 \in L^1_{loc} \}
\,\,\,
x \in X,
$$
which is a higher rank analogy of a multiplier ideal sheaf.
In this paper, we will denote by $E_{(x)}$ the stalk of $E$ at $x$, defined by 
$\displaystyle \varinjlim_{x \in U} H^0(U , E)$.
We consider the following problems.
\begin{prob}
\label{problem}
\begin{enumerate}
\item Is $E(h)$ a coherent sheaf ?
\item Does there exist a Nadel-Nakano type vanishing theorem, that is, the vanishing of the cohomology group $H^q( X , K_X \otimes E(h) ) $ for any $q \ge 1$ if $h$ has some positivity ?
\end{enumerate}

\end{prob}
Unlike a multiplier ideal sheaf, we do not know if $E(h)$ is coherent.
In \cite{Cat98}, de Cataldo proved $E(h)$ is coherent and a Nadel-Nakano type vanishing theorem
if $h$ has an approximate sequence of smooth Hermitian metrics $\{  h_{\mu} \}$
satisfying
$h_{\mu}  \uparrow h$ pointwise and $\sqrt{-1} \Theta_{E , h_{\mu}} - \eta \omega \otimes Id_{E} \ge 0$ in the sense of Nakano 
for some positive and continuous function $\eta$.
However, $h$ does not always have such approximate sequence 
(See in \cite[Example 4.4]{Hos17} ).
Therefore these problems are open.


We give a partial answer to Problem \ref{problem}.
First we prove the coherentness of $E(h)$ under some assumptions.
\begin{thm}
\label{coh}
Let $(X , \omega)$ be a \kah manifold and $(E , h)$ be a holomorphic vector bundle on $X$ with a singular Hermitian metric.
We assume the following conditions.
\begin{enumerate}
\item There exists a proper analytic subset $Z$ such that $h$ is smooth on $X  \setminus Z$.
\item $h e^{-\zeta}$ is a positively curved singular Hermitian metric on $E$
for some continuous function $\zeta$ on $X$.
\item There exists a real number $C$ such that $\sqrt{-1} \Theta_{E , h} - C \omega \otimes \textit{Id}_{E}  \ge 0$ on $X  \setminus Z$ in the sense of Nakano.
\end{enumerate}
Then the sheaf $E(h)$ is coherent.
\end{thm}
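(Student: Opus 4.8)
The plan is to argue locally and to imitate Nadel's proof of coherence of multiplier ideal sheaves, with the plurisubharmonic weight replaced by the vector bundle metric $h$; the three hypotheses are exactly what is needed to make the $L^2$ machinery run. First I would dispose of the trivial locus: if $x_0\notin Z$ then $h$ is smooth (hence continuous) near $x_0$, so $|f_x|_h^2$ is locally integrable for every local holomorphic section, whence $E(h)=\mathcal O(E)$ near $x_0$, which is coherent. So fix $x_0\in Z$ and work on a small coordinate ball $B$ on which $E$ is trivial, with $h_0$ the associated flat smooth metric. I would make two reductions on $B$. First, since $he^{-\zeta}$ is positively curved, its dual is negatively curved, and local boundedness above of plurisubharmonic functions (applied to a holomorphic frame of $E^{*}$) forces $(he^{-\zeta})^{*}\le M\,h_0^{*}$, i.e. $he^{-\zeta}\ge M^{-1}h_0$, on $B$; as $\zeta$ is continuous this gives $h\ge c'\,h_0$ on $B$ with $c'>0$. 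Second, by (3) we have $\sqrt{-1}\Theta_{E,h}\ge C\,\omega\otimes\textit{Id}_{E}$ on $B\setminus Z$, so replacing $h$ by $\tilde h:=h\,e^{-A|z-x_0|^2}$ with $A$ large enough (relative to $C$ and to a bound $\omega\le A_0\,\omega_{\mathrm{eucl}}$ on $B$) yields $\sqrt{-1}\Theta_{E,\tilde h}\ge\omega_{\mathrm{eucl}}\otimes\textit{Id}_{E}$ on $B\setminus Z$; since $e^{-A|z-x_0|^2}$ is bounded above and below on $B$, this changes neither $E(h)$ nor the lower bound $h\ge c''h_0$. Thus I may assume that on $B$ the metric $h$ is smooth and strictly Nakano positive off $Z$ and bounded below by a positive multiple of $h_0$.

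Next I would set $\mathcal H:=\{F\in H^0(B,\mathcal O(E)):\int_B|F|_h^2\,dV<\infty\}$ and let $\mathcal F\subseteq\mathcal O(E)|_B$ be the subsheaf generated by $\mathcal H$; then $\mathcal F$ is coherent by the strong Noetherian property of coherent sheaves (applied to $\mathcal O(E)$), and $\mathcal F\subseteq E(h)|_B$ trivially. The heart of the argument is the reverse inclusion $E(h)_x\subseteq\mathcal F_x$ at an arbitrary $x\in B$. Given $f\in E(h)_x$ — so $f$ is holomorphic and $L^2$ for $h$ on a ball $U\ni x$ contained in $B$ — and given $k\ge 1$, I would choose a cut-off $\chi\in C_c^{\infty}(B)$ with $\operatorname{supp}\chi\Subset U$ and $\chi\equiv 1$ near $x$, set $g:=\bar\partial(\chi f)$ (smooth on $B$, $\bar\partial$-closed, supported in $U\setminus\{x\}$, with $\int_B|g|_h^2<\infty$), and solve $\bar\partial u=g$ on $B$ with the estimate
\[
\int_B|u|_h^2\,|z-x|^{-2(n+k)}\,dV\ \le\ \int_B|g|_h^2\,|z-x|^{-2(n+k)}\,dV\ <\ \infty ,
\]
by applying the $L^2$ existence theorem for $\bar\partial$ on the pseudoconvex ball $B$ to $g$, viewed as a $(K_B\otimes E)$-valued $(0,1)$-form, with respect to the metric $h\,e^{-(n+k)\log|z-x|^2}$ on $E$ (still strictly Nakano positive off $Z$, since $(n+k)\log|z-x|^2$ is plurisubharmonic); the right-hand side is finite because $g$ vanishes near $x$. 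Then $F:=\chi f-u$ satisfies $\bar\partial F=g-g=0$ and is $L^2$ for $h$ on $B$, hence is a holomorphic section with $F\in\mathcal H$, so $F_x\in\mathcal F_x$. Near $x$ we have $f-F=u$ with $u$ holomorphic (as $\bar\partial\chi$ vanishes there), and the lower bound $h\ge c''h_0$ upgrades the estimate to $\int|u|^2|z-x|^{-2(n+k)}<\infty$ near $x$, which forces $u\in\mathfrak m_x^{\,k+1}\mathcal O(E)_x$. Thus $f\in\mathcal F_x+\mathfrak m_x^{\,k+1}\mathcal O(E)_x$ for every $k\ge 1$.

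To conclude I would invoke Krull's intersection theorem: since $\mathcal F_x$ is a finitely generated module over the Noetherian local ring $\mathcal O_{X,x}$, one has $\bigcap_{k\ge 1}\bigl(\mathcal F_x+\mathfrak m_x^{\,k}\mathcal O(E)_x\bigr)=\mathcal F_x$, hence $f\in\mathcal F_x$. Therefore $E(h)|_B=\mathcal F$ is coherent, and combined with the case $x_0\notin Z$ this proves that $E(h)$ is coherent on all of $X$.

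The step I expect to be the main obstacle is the $L^2$-estimate for $\bar\partial$ invoked above: because $h$ is only smooth and Nakano positive on the complement of the analytic set $Z$, and is not globally of the form $e^{-\varphi}$ with $\varphi$ plurisubharmonic, one cannot simply quote the Nadel/Hörmander estimate. Instead I would carry out Hörmander's estimate on $B\setminus Z$ equipped with a complete Kähler metric (such exists since $B\setminus Z$ is Stein), obtain a solution with uniform $L^2$ bounds, and then extend it across the measure-zero set $Z$ — which is legitimate because the solution is $L^2_{\mathrm{loc}}$ on $B$ and $g$ is smooth on $B$. This is precisely where hypotheses (1) and (2) are indispensable, the latter supplying the lower bound $h\ge c'h_0$ needed both for the extension of the solution across $Z$ and for the conclusion that $u$ vanishes to order $k+1$ at $x$; and (3) is what upgrades the mere Griffiths semipositivity coming from (2) to the Nakano positivity required by the $L^2$ estimate.
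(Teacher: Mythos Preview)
Your proposal is correct and follows essentially the same route as the paper: localize, generate a coherent subsheaf $\mathcal F$ from the global $L^2$ sections, and show $E(h)_x\subseteq\mathcal F_x$ by solving $\bar\partial$ on the complement of $Z$ (equipped with a complete K\"ahler metric) against the weight $(n+k)\log|z-x|^2$, using the lower bound $h\ge c'h_0$ (the paper's Lemma~\ref{keylemma}) to extend the solution across $Z$ and to read off the vanishing order at $x$. The only cosmetic differences are that the paper absorbs the Nakano lower bound via an extra $C|z|^2$ term in $\varphi_k$ rather than modifying $h$ beforehand, and finishes with Artin--Rees plus Nakayama instead of invoking Krull's intersection theorem directly---these are equivalent.
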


Next we study the cohomology group $H^q( X , K_X \otimes E(h) )$ for any $q \ge 1$.
We prove a vanishing theorem and an injectivity theorem of vector bundles with singular Hermitian metrics under some assumptions.
\begin{thm}
\label{vanishing}
Let $(X , \omega)$ be a compact \kah manifold and $(E , h)$ be a holomorphic vector bundle on $X$ with a singular Hermitian metric.
We assume the following conditions.
\begin{enumerate}
\item There exists a proper analytic subset $Z$ such that $h$ is smooth on $X  \setminus Z$.
\item $h e^{-\zeta}$ is a positively curved singular Hermitian metric on $E$
for some continuous function $\zeta$ on $X$.
\item There exists a positive number $\epsilon > 0$ such that $\sqrt{-1} \Theta_{E , h} - \epsilon \omega \otimes \textit{Id}_{E}  \ge 0$ on $X  \setminus Z$ in the sense of Nakano.
\end{enumerate}
Then $H^q( X , K_X \otimes E(h) ) = 0$ holds for any $q \ge 1$.
\end{thm}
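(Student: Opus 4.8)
The plan is to realize $K_X\otimes E(h)$ as the kernel of $\bar\partial$ on a fine resolution by sheaves of locally square-integrable forms, and then to kill the higher cohomology by an $L^2$-estimate carried out on $X\setminus Z$, where $h$ is smooth; at no point is $h$ itself regularized. Note first that condition (3) here implies condition (3) of Theorem~\ref{coh} with $C=\epsilon$, so $(E,h)$ satisfies the hypotheses of Theorem~\ref{coh} and $E(h)$ is coherent. For $0\le q\le n$ let $\mathcal{L}^{q}$ be the sheaf whose sections over an open set $U$ are the $E$-valued $(n,q)$-forms $u$ with $|u|^2_{h,\omega}\in L^1_{\mathrm{loc}}(U)$ and $|\bar\partial u|^2_{h,\omega}\in L^1_{\mathrm{loc}}(U)$, where $\bar\partial$ is taken in the sense of distributions. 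Each $\mathcal{L}^q$ is a module over the sheaf of $C^\infty$ functions, hence fine and $\Gamma$-acyclic, and $\ker(\bar\partial\colon\mathcal{L}^0\to\mathcal{L}^1)=K_X\otimes E(h)$ by the ellipticity of $\bar\partial$. Once we know that
\[
0\longrightarrow K_X\otimes E(h)\longrightarrow\mathcal{L}^0\xrightarrow{\ \bar\partial\ }\mathcal{L}^1\xrightarrow{\ \bar\partial\ }\cdots\xrightarrow{\ \bar\partial\ }\mathcal{L}^n\longrightarrow 0
\]
is exact, the abstract de Rham theorem gives $H^q(X,K_X\otimes E(h))\cong H^q\!\big(\Gamma(X,\mathcal{L}^\bullet),\bar\partial\big)$, and since $X$ is compact a global section of $\mathcal{L}^q$ is simply an $(n,q)$-form $u$ with $|u|^2_{h,\omega}\in L^1(X)$ and $|\bar\partial u|^2_{h,\omega}\in L^1(X)$.

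To establish exactness it suffices, for $q\ge1$ and a $\bar\partial$-closed local section $u$ of $\mathcal{L}^q$ on a ball, to produce a local section $v$ of $\mathcal{L}^{q-1}$ with $\bar\partial v=u$. On a small ball disjoint from $Z$, where $h$ is smooth, this is the classical local solvability of $\bar\partial$ with $L^2(h)$-estimates. On a relatively compact coordinate ball $B$ meeting $Z$, one argues exactly as in the global step below, applied to $B\setminus Z$: equip $B\setminus Z$ with a complete \kah metric, solve $\bar\partial v=u$ on $B\setminus Z$ with $\int_B|v|^2_{h,\omega}<\infty$ using the Nakano positivity of $\sqrt{-1}\Theta_{E,h}$ on $X\setminus Z$, and then invoke the removable-singularity lemma for $L^2$ solutions of $\bar\partial$-equations across analytic sets (using cutoff functions $\chi_k\uparrow1$ on $B\setminus Z$, vanishing near $Z$, with $\|\bar\partial\chi_k\|_{L^2}\to0$) to conclude that $\bar\partial v=u$ holds on all of $B$.

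For the vanishing itself, fix $q\ge1$ and a $\bar\partial$-closed $u\in\Gamma(X,\mathcal{L}^q)$, so $\int_X|u|^2_{h,\omega}\,dV_\omega<\infty$. Choose a family of complete \kah metrics $\hat\omega_\delta=\omega+\delta\hat\gamma$ on $X\setminus Z$ (with $\hat\gamma$ a fixed complete \kah metric on $X\setminus Z$, which exists since $Z$ is a proper analytic subset), so that $\hat\omega_\delta\ge\omega$ and $\hat\omega_\delta\downarrow\omega$ as $\delta\downarrow0$. On the complete \kah manifold $(X\setminus Z,\hat\omega_\delta)$, with the smooth metric $h$ and $\sqrt{-1}\Theta_{E,h}\ge\epsilon\,\omega\otimes\textit{Id}_{E}\ge0$ in the sense of Nakano, the $L^2$-existence theorem for $\bar\partial$ on complete \kah manifolds yields $v_\delta$ with $\bar\partial v_\delta=u$ and
\[
\int_{X\setminus Z}|v_\delta|^2_{h,\hat\omega_\delta}\,dV_{\hat\omega_\delta}\ \le\ \int_{X\setminus Z}\big\langle[\sqrt{-1}\Theta_{E,h},\Lambda_{\hat\omega_\delta}]^{-1}u,u\big\rangle_{h,\hat\omega_\delta}\,dV_{\hat\omega_\delta}.
\]
The decisive point is the pointwise inequality, valid for $(n,q)$-forms whenever $\hat\omega\ge\omega$ and $\sqrt{-1}\Theta_{E,h}\ge\epsilon\,\omega\otimes\textit{Id}_{E}$,
\[
\big\langle[\sqrt{-1}\Theta_{E,h},\Lambda_{\hat\omega}]^{-1}u,u\big\rangle_{h,\hat\omega}\,dV_{\hat\omega}\ \le\ \tfrac1{q\epsilon}\,|u|^2_{h,\omega}\,dV_\omega,
\]
which one checks by writing $\sqrt{-1}\Theta_{E,h}=\epsilon\,\omega\otimes\textit{Id}_{E}+\Theta'$ with $\Theta'\ge0$ in the sense of Nakano, discarding the non-negative $[\Theta',\Lambda_{\hat\omega}]$-term, and diagonalizing $\omega$ and $\hat\omega$ simultaneously, using $\prod_{j\in J}\lambda_j^{-1}\le\tfrac1q\sum_{j\in J}\lambda_j^{-1}$ for the common eigenvalues $\lambda_j\ge1$. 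Hence $\int_{X\setminus Z}|v_\delta|^2_{h,\hat\omega_\delta}\,dV_{\hat\omega_\delta}\le M:=\tfrac1{q\epsilon}\int_X|u|^2_{h,\omega}\,dV_\omega$, uniformly in $\delta$. Since $|w|^2_{h,\hat\omega_{\delta_0}}\,dV_{\hat\omega_{\delta_0}}\le|w|^2_{h,\hat\omega_\delta}\,dV_{\hat\omega_\delta}$ whenever $\delta\le\delta_0$, the family $\{v_\delta\}$ is bounded in $L^2(X\setminus Z,E;h,\hat\omega_{\delta_0})$ for each fixed $\delta_0$; a weak-compactness and diagonal argument produces $v$ on $X\setminus Z$ with $\bar\partial v=u$ there and $\int_{X\setminus Z}|v|^2_{h,\hat\omega_{\delta_0}}\,dV_{\hat\omega_{\delta_0}}\le M$ for all $\delta_0$, whence $\int_X|v|^2_{h,\omega}\,dV_\omega\le M<\infty$ by monotone convergence ($Z$ being a set of measure zero). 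Removing the singularity of $v$ along $Z$ as in the previous paragraph, $v$ is a section of $\mathcal{L}^{q-1}$ over $X$ with $\bar\partial v=u$, so $[u]=0$ in $H^q(\Gamma(X,\mathcal{L}^\bullet),\bar\partial)$, i.e. $H^q(X,K_X\otimes E(h))=0$.

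The main obstacle is precisely the singularity of $h$ along $Z$: one cannot run $L^2$-theory with $h$ over the whole of $X$, so both the exactness of the resolution across $Z$ and the global solvability must be carried out on $X\setminus Z$ equipped with an auxiliary complete \kah metric, and one must then descend back to the metric $\omega$ and to all of $X$. This descent is where the $(n,q)$-form comparison $|u|^2_{h,\hat\omega}\,dV_{\hat\omega}\le|u|^2_{h,\omega}\,dV_\omega$ for $\hat\omega\ge\omega$ and the removable-singularity lemma for $L^2$ $\bar\partial$-equations across analytic sets are used, and this is exactly how the proof dispenses with any regularization of the singular metric $h$.
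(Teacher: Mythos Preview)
Your argument is correct, and the key analytic ingredients—the $L^2$ existence theorem on the complete \kah manifold $X\setminus Z$, the monotonicity $|u|^2_{\hat\omega,h}\,dV_{\hat\omega}\le|u|^2_{\omega,h}\,dV_\omega$ for $(n,q)$-forms when $\hat\omega\ge\omega$, and the removable-singularity step across the analytic set $Z$—are precisely the ones the paper uses. Where you differ is in the \emph{packaging}: the paper does not build a fine resolution by locally $L^2$ sheaves. Instead it proves directly (Theorem~\ref{cohomology}) an isomorphism between $H^q(X,K_X\otimes E(h))$ and the $L^2$-Dolbeault cohomology on $Y=X\setminus Z$ with respect to a single complete \kah form $\omega'\ge\omega$, by writing down mutually inverse maps between \v{C}ech cohomology and Dolbeault cohomology (following \cite{Fuj07}). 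The vanishing then follows in one line from Theorem~\ref{estimate} applied with the incomplete form $\omega$ on the complete manifold $Y$. Your approach, by contrast, is closer to Demailly's proof of Nadel vanishing: you check local $\bar\partial$-exactness of the $\mathcal{L}^\bullet$ complex (which amounts to the same local $L^2$ estimate the paper uses in Claim~\ref{modl2}), then solve globally. The trade-off is that your fine-resolution argument avoids the explicit \v{C}ech bookkeeping but requires you to run the $L^2$ machinery twice (once locally for exactness, once globally for vanishing), and your detailed argument with the family $\hat\omega_\delta$ and the weak-limit extraction is essentially a re-derivation of what the paper cites as Theorem~\ref{estimate}. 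The paper's route is shorter once Theorem~\ref{cohomology} is in place, and that isomorphism is reused for the injectivity theorem (Theorem~\ref{injective}) and for the Ohsawa-type result; your approach would require a separate argument for those.
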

\begin{thm}

\label{injective}
Let $(X , \omega)$ be a compact \kah manifold, $(E , h)$ be a holomorphic vector bundle on $X$ with a singular Hermitian metric and
$(L , h_L )$ be a holomorphic line bundle with a smooth metric.
We assume the following conditions.
\begin{enumerate}
\item There exists a proper analytic subset $Z$ such that $h$ is smooth on $X  \setminus Z$.
\item $h e^{-\zeta}$ is a positively curved singular Hermitian metric on $E$
for some continuous function $\zeta$ on $X$.
\item $\sqrt{-1} \Theta_{E , h}  \ge 0$ on $X  \setminus Z$ in the sense of Nakano.
\item There exists a positive number 
$\epsilon > 0$ such that $\sqrt{-1} \Theta_{E , h} - \epsilon \sqrt{-1} \Theta_{L , h_L} \otimes \textit{Id}_{E}  \ge 0$ on $X  \setminus Z$ in the sense of Nakano.

\end{enumerate}

Let $s$ be a non zero section of $L$.
Then for any $q \ge 0$, the multiplication homomorphism 
$$
\times s  : H^q( X , K_X \otimes E(h) ) \rightarrow H^q( X , K_X \otimes L \otimes E(h) )
$$
is injective.
\end{thm}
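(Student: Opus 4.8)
The plan is to reduce the statement to an assertion about $\bar\partial$-harmonic forms for the (possibly singular) metric $h$, to manufacture such harmonic representatives by Demailly's complete-K\"ahler-metric technique on $X\setminus Z$ — so that no regularization of $h$ is needed, in keeping with the method of this paper — and then to run the injectivity mechanism of Enoki and Fujino.

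First I would record that $(E,h)$ satisfies all hypotheses of Theorem \ref{coh} (condition (3) here gives $C=0$ there), so $E(h)$ is coherent; since $L$ is locally free and $h_L$ is smooth, the same holds for $(L\otimes E)(h\otimes h_L)=L\otimes E(h)$. The local $L^2$-solvability of $\bar\partial$ used in proving Theorem \ref{coh} is a purely local statement, and as $h_L$ is locally bounded above and below it applies verbatim to the weight $h\otimes h_L$; hence the complexes of global $(n,\bullet)$-forms valued in $E$ (resp.\ $E\otimes L$) that are locally square integrable with respect to $h$ (resp.\ $h\otimes h_L$) — on the compact $X$, locally square integrable means square integrable — are fine resolutions of $K_X\otimes E(h)$ (resp.\ $K_X\otimes L\otimes E(h)$), hence compute these cohomology groups. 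It is therefore enough to show: if $u$ is a $\bar\partial$-closed $E$-valued $(n,q)$-form that is $L^2$ with respect to $(h,\omega)$ and $s\,u=\bar\partial v$ for some $E\otimes L$-valued $(n,q-1)$-form $v$ that is $L^2$ with respect to $(h\otimes h_L,\omega)$, then the class of $u$ vanishes.

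Next I would produce a harmonic representative. On $Y:=X\setminus Z$ the metric $h$ is smooth, and $Y$ carries a family of complete K\"ahler metrics $\widehat\omega_\delta$ ($\delta>0$), built from $\omega$ and a cut-off of the logarithm of a function defining $Z$, with $\widehat\omega_\delta\searrow\omega$ on $Y$ as $\delta\to0$. The point that makes forms of bidegree $(n,\bullet)$ convenient is that their pointwise — hence $L^2$ — norms are independent of the chosen K\"ahler metric, which lets one pass to the limit $\delta\to0$ without losing $L^2$ control; combining this with hypotheses (2) and (3) exactly as in the proof of Theorem \ref{vanishing} would produce an $E$-valued $(n,q)$-form $\widetilde u$ on $Y$, $L^2$ for $(h,\omega)$, representing the class of $u$, with $\bar\partial\widetilde u=0$ and $\bar\partial^{*}\widetilde u=0$; elliptic regularity makes $\widetilde u$ smooth on $Y$. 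The Bochner--Kodaira--Nakano identity for $(E,h)$ on $(Y,\widehat\omega_\delta)$ — whose $D'$-term vanishes since $(n+1,\bullet)$-forms are zero — together with (3) then forces $D'^{*}\widetilde u=0$ and the pointwise vanishing $\langle[\sqrt{-1}\Theta_{E,h},\Lambda]\widetilde u,\widetilde u\rangle=0$ on $Y$. Finally I would invoke the injectivity mechanism. The form $s\,\widetilde u$ is $E\otimes L$-valued of bidegree $(n,q)$ and satisfies $\bar\partial(s\,\widetilde u)=0$ because $s$ is holomorphic. Since $\sqrt{-1}\Theta_{E,h}\geq\epsilon\,\sqrt{-1}\Theta_{L,h_L}\otimes\mathrm{Id}_E$ in the sense of Nakano, on $(n,q)$-forms one has $\langle[\sqrt{-1}\Theta_{L,h_L}\otimes\mathrm{Id}_E,\Lambda]\,(s\,\widetilde u),(s\,\widetilde u)\rangle\leq\tfrac1\epsilon\,|s|_{h_L}^2\,\langle[\sqrt{-1}\Theta_{E,h},\Lambda]\widetilde u,\widetilde u\rangle=0$ pointwise, and likewise $\langle[\sqrt{-1}\Theta_{E,h},\Lambda]\,(s\,\widetilde u),(s\,\widetilde u)\rangle=0$. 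Plugging these into the Bochner--Kodaira--Nakano identity for $(E\otimes L,h\otimes h_L)$ on $(Y,\widehat\omega_\delta)$, and using the relations $\bar\partial^{*}\widetilde u=0$ and $D'^{*}\widetilde u=0$ established above, I would conclude that $s\,\widetilde u$ is $\bar\partial$-harmonic for $(h\otimes h_L,\omega)$ — the higher-rank, singular-metric analogue of Enoki's lemma. A $\bar\partial$-harmonic form that is $\bar\partial$-exact (here $s\,\widetilde u=\bar\partial v$) must vanish, so $s\,\widetilde u\equiv0$ on $Y$; since $s\not\equiv0$ and $\widetilde u$ is smooth on the dense open set $Y$, this gives $\widetilde u\equiv0$ on $Y$, hence $[u]=[\widetilde u]=0$. (The case $q=0$ is trivial since $s$ does not vanish identically.)

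The hard part will be the two points that come from $h$ being singular: first, passing from the local $L^2$-theory of Theorem \ref{coh} to genuine harmonic representatives of classes in $H^q(X,K_X\otimes E(h))$, which has to be carried out on the non-compact $Y$ with the complete metrics $\widehat\omega_\delta$ and then pushed through $\delta\to0$, relying on the metric-independence of $(n,\bullet)$-norms to keep the $L^2$ bounds; and second, making the Enoki--Fujino step rigorous, since the Bochner--Kodaira--Nakano identities and the integrations by parts are only legitimate on $Y$, so one must check that no boundary term from $Z$ survives (this is exactly what completeness of $\widehat\omega_\delta$ buys) and that the curvature inequalities supplied by (3) and (4) persist uniformly in $\delta$. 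I expect this second point — propagating the pointwise curvature vanishing for $\widetilde u$ into $\bar\partial$-harmonicity of $s\,\widetilde u$, uniformly enough to survive the limit — to be the most delicate step.
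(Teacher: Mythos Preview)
Your strategy --- reduce to $L^2$-harmonic forms on $Y=X\setminus Z$ and then run the Enoki--Fujino curvature mechanism --- is exactly the paper's, and your treatment of the second half (the Bochner--Kodaira--Nakano identity, the pointwise vanishing of the curvature term on $\widetilde u$, and the conclusion $s\,\widetilde u\equiv0$) is essentially the content of \cite[Claim 2]{Fuj07} that the paper invokes. The gap is in the first half.

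You need every class in $H^q(X,K_X\otimes E(h))$ to admit an $L^2$-harmonic representative on $(Y,\omega')$. On a noncompact manifold this requires $\mathrm{Im}\,\bar\partial$ to be closed in $L^2_{n,q}(Y,E)_{\omega',h}$, and nothing in your outline supplies that: condition (3) is only \emph{semi}positive, so the $L^2$ estimate of Theorem \ref{estimate} that drives Theorem \ref{vanishing} is unavailable, and your phrase ``exactly as in the proof of Theorem \ref{vanishing}'' cannot be cashed in. The paper fills this gap with the argument of \cite[Claim 1]{Fuj07}: by Theorem \ref{coh} the sheaf $E(h)$ is coherent, so on the compact $X$ the group $H^q(X,K_X\otimes E(h))$ is finite-dimensional; combining this with the isomorphism of Theorem \ref{cohomology} forces $\overline{\mathrm{Im}\,\bar\partial}=\mathrm{Im}\,\bar\partial$ and $\overline{\mathrm{Im}\,\bar\partial^{*}_{\omega',h}}=\mathrm{Im}\,\bar\partial^{*}_{\omega',h}$, whence $H^q(X,K_X\otimes E(h))\cong\mathcal{H}^{n,q}(Y,E)$ via Theorem \ref{harmonic}. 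Two smaller points: the family $\widehat\omega_\delta$ and the limit $\delta\to0$ are unnecessary --- a single complete K\"ahler metric $\omega'\geq\omega$ on $Y$ suffices, as in Section 4 --- and your claim that pointwise norms of $(n,\bullet)$-forms are independent of the K\"ahler metric is only correct in bidegree $(n,0)$; for $q\geq1$ one has only the inequality $|u|^2_{\omega',h}\,dV_{\omega'}\leq|u|^2_{\omega,h}\,dV_{\omega}$ (Claim \ref{inequality}), which is still enough but should be stated correctly.
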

Therefore we proved a Nadel-Nakano type vanishing theorem with some assumptions.
If $E$ is a holomorphic line bundle, these theorems were proved in \cite{Fuj07}.
We point out we do not use an approximation sequence of a singular Hermitian metric to show these theorems.

Some applications are indicated as follows.
First we treat a singular Hermitian metric induced by holomorpic sections, proposed by Hosono \cite[Chapter 4]{Hos17}.
By caluculating the curvature of this metric,
we prove that
we can apply Theorem \ref{vanishing} 
to Hosono's example.
Therefore we can apply a Nadel-Nakano type vanishing theorem
even if $h$ does not have an approximate sequence such as \cite{Cat98}.
Second, we generalize Griffiths' vanishing theorem,
that is, 
$H^q(X , K_X \otimes {\rm Sym}^{m}(E) \otimes \det E) = 0$ holds
for any $m \ge 0$ and $q \ge 1$ if $E$ is an ample vector bundle.
We treat the case when $E$ is a big vector bundle.
If $E$ is a big vector bundle with some assumptions, ${\rm Sym}^{m}(E) \otimes \det E$ can be endowed
with a singular Hermitian metric $h_m$
satisfying the assumptions such as Theorem \ref{vanishing}
(see Section 5.2).
Therefore
$H^q(X , K_X \otimes ({\rm Sym}^{m}(E) \otimes \det E) (h_m)) = 0$ holds
for any $m \ge 0$ and $q \ge 1$.

Finally, we generalize Ohsawa's vanishing theorem.
\begin{thm}
\label{ohsawa}
Let $(X , \omega)$ be a compact \kah manifold and $(E , h)$ be a holomorphic vector bundle on $X$ with a singular Hermitian metric.
Let $\pi \colon X \rightarrow W$ be a proper surjective holomorphic map to an analytic space with a \kah form $\sigma$.
We assume the following conditions.
\begin{enumerate}
\item There exists a proper analytic subset $Z$ such that $h$ is smooth on $X  \setminus Z$.
\item $h e^{-\zeta}$ is a positively curved singular Hermitian metric on $E$
for some continuous function $\zeta$ on $X$.
\item $\sqrt{-1} \Theta_{E , h} -  \pi^{*}\sigma \otimes \textit{Id}_{E}  \ge 0$ on $X  \setminus Z$ in the sense of Nakano.
\end{enumerate}
Then $H^q\bigl( W  , \pi_{*} (K_X \otimes E(h) ) \bigr) = 0$ holds for any $q \ge 1$.
\end{thm}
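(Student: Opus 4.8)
The plan is to reduce the relative vanishing to the $L^2$-Hodge theory of $E(h)$ on $X$ — the same circle of ideas behind Theorems \ref{coh} and \ref{vanishing} — combined with a Takegoshi--Koll\'ar type decomposition of the direct image. Observe first that, since $\pi^{*}\sigma\ge 0$, condition (3) of Theorem \ref{ohsawa} implies $\sqrt{-1}\Theta_{E,h}\ge 0$ on $X\setminus Z$ in the sense of Nakano; together with (1) and (2) this is precisely what Theorem \ref{coh} requires (take $C=0$ there), so $E(h)$ is coherent and hence $\pi_{*}(K_X\otimes E(h))$ is a coherent sheaf on the compact analytic space $W$. In particular $H^{q}\bigl(W,\pi_{*}(K_X\otimes E(h))\bigr)=0$ whenever $q>\dim W$, so it suffices to treat $1\le q\le\dim W$.

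Fix $\epsilon>0$ and set $\omega_{\epsilon}:=\pi^{*}\sigma+\epsilon\omega$. Since $\sigma$ is locally $\deldel\psi$ with $\psi$ smooth plurisubharmonic, $\pi^{*}\sigma$ is smooth and semipositive, so $\omega_{\epsilon}$ is a \kah form on $X$. The complex of $L^2$ $E$-valued $(n,\bullet)$-forms for the metrics $(\omega_{\epsilon},h)$ is a fine resolution of $K_X\otimes E(h)$ (by the arguments underlying Theorems \ref{coh} and \ref{vanishing}), which yields the Leray spectral sequence $E_{2}^{p,q'}=H^{p}\bigl(W,R^{q'}\pi_{*}(K_X\otimes E(h))\bigr)\Rightarrow H^{p+q'}(X,K_X\otimes E(h))$. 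Because $(E,h)$ is Nakano semipositive off $Z$ and $he^{-\zeta}$ is positively curved, a Takegoshi--Koll\'ar type argument — the analogue for the singular metric $h$ of the splitting of $R\pi_{*}(K_X\otimes E)$ for a Nakano semipositive bundle — should give $E_{2}$-degeneration and a realization of the spectral sequence by $L^2$-harmonic forms with a clean horizontal/vertical type. Concretely, a class in the summand $H^{q}\bigl(W,\pi_{*}(K_X\otimes E(h))\bigr)$ would then be represented by a harmonic $(n,q)$-form all of whose antiholomorphic differentials are pulled back from $W$ (``horizontal''), while $H^{0}\bigl(W,R^{q}\pi_{*}(K_X\otimes E(h))\bigr)$ is represented by ``purely vertical'' harmonic forms.

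Now let $\alpha$ be such a horizontal harmonic representative. Since $\alpha$ is harmonic and $\sqrt{-1}\Theta_{E,h}\ge\pi^{*}\sigma\otimes\textit{Id}_{E}$ by condition (3), the Bochner--Kodaira--Nakano inequality on $(X,\omega_{\epsilon})$ gives
\begin{align*}
0=\|\bar\partial\alpha\|_{\omega_{\epsilon}}^{2}+\|\bar\partial_{\omega_{\epsilon}}^{*}\alpha\|_{\omega_{\epsilon}}^{2}
&\ \ge\ \bigl\langle[\sqrt{-1}\Theta_{E,h},\Lambda_{\omega_{\epsilon}}]\alpha,\alpha\bigr\rangle_{\omega_{\epsilon}} \\
&\ \ge\ \bigl\langle[\pi^{*}\sigma\otimes\textit{Id}_{E},\Lambda_{\omega_{\epsilon}}]\alpha,\alpha\bigr\rangle_{\omega_{\epsilon}}\ \ge\ 0,
\end{align*}
hence $\langle[\pi^{*}\sigma\otimes\textit{Id}_{E},\Lambda_{\omega_{\epsilon}}]\alpha,\alpha\rangle_{\omega_{\epsilon}}=0$. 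On the Zariski-open subset where $\pi$ is a submersion over the regular locus of $W$, a pointwise computation shows that $[\pi^{*}\sigma\otimes\textit{Id}_{E},\Lambda_{\omega_{\epsilon}}]$ is nonnegative and, on the $d\bar z_{K}$-component of an $(n,q)$-form, has eigenvalue $\sum_{i\in K}s_{i}/(\epsilon+s_{i})$, where the $s_{i}\ge 0$ are the eigenvalues of $\pi^{*}\sigma$ with respect to $\omega$; this is strictly positive unless $K$ consists only of fibre directions. Hence $\alpha$ must be purely vertical. But for $q\ge 1$ a harmonic form that is simultaneously horizontal and purely vertical is zero, so the class we started with vanishes, and $H^{q}\bigl(W,\pi_{*}(K_X\otimes E(h))\bigr)=0$ for all $q\ge 1$. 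Note that no limit $\epsilon\to 0$ is required; verticality is forced for every fixed $\epsilon>0$.

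The main obstacle is the homological/analytic input in the second paragraph: establishing, for the \emph{singular} metric $h$, both the fine $L^2$-Dolbeault resolution of $K_X\otimes E(h)$ adapted to $(\omega_{\epsilon},h)$ and the Takegoshi--Koll\'ar type $E_{2}$-degeneration with its harmonic realization that identifies $H^{q}\bigl(W,\pi_{*}(K_X\otimes E(h))\bigr)$ with horizontal harmonic forms. All of this must be carried out in the presence of three exceptional sets — the singular locus $Z$ of $h$, the singular locus of $W$, and the non-submersion locus of $\pi$ — which, being of measure zero, do not affect $L^2$-forms but do complicate the bookkeeping. A tempting alternative, pulling everything back to a resolution $\mu\colon\tilde W\to W$ and invoking Theorem \ref{vanishing}, does not work directly, because $\mu^{*}\sigma$ degenerates along the exceptional set of $\mu$ and one loses the uniform strict positivity that Theorem \ref{vanishing} demands; this is why running the Bochner--Kodaira--Nakano argument on $X$ with the degenerating metrics $\omega_{\epsilon}$ is the more natural route. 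Granting the homological input, the curvature estimate and the linear algebra of $[\pi^{*}\sigma,\Lambda_{\omega_{\epsilon}}]$ are entirely routine.
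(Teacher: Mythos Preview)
Your proposal hinges on an input you yourself flag as unproved: the Takegoshi--Koll\'ar type $E_2$-degeneration and the identification of $H^q\bigl(W,\pi_*(K_X\otimes E(h))\bigr)$ with \emph{horizontal} harmonic $(n,q)$-forms, for the singular metric $h$. This is not merely bookkeeping over measure-zero sets. Even for smooth $h$, Takegoshi's splitting is a substantial theorem; here you would need it while $h$ blows up along $Z$, and the Hodge theory you invoke on $(X,\omega_\epsilon,h)$ is not even set up --- $\omega_\epsilon$ is not complete on $X\setminus Z$, so you have no harmonic decomposition there, and on $X$ itself the inner product is undefined where $h$ is singular. Without a harmonic representative known a priori to be horizontal, the Bochner--Kodaira--Nakano computation in your third paragraph, however correct as linear algebra, has nothing to act on and does not close the argument.

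The paper avoids harmonic splitting and spectral sequences entirely. It runs a direct \v{C}ech argument on a Stein cover $\{U_j\}$ of $W$, parallel to the proof of Theorem~\ref{cohomology}: from a \v{C}ech $q$-cocycle $c$ one builds, via a partition of unity pulled back from $W$, a global $\bar\partial$-closed $(n,q)$-form $b$ on $Y=X\setminus Z$ lying in Ohsawa's limit space $L^2_{n,q}(Y,E)_{\pi^*\sigma,h}$ (Claim~\ref{ohsawalem} supplies the uniform-in-$\epsilon$ bound on $|\bar\partial(\pi^*\rho_j)|_{\epsilon\omega+\pi^*\sigma}$ needed here). Ohsawa's $L^2$ estimate, Theorem~\ref{ohsawal2}, then solves $\bar\partial a=b$ in that space; iterating this and using Lemma~\ref{keylemma} with Riemann extension across $Z$ produces an explicit \v{C}ech primitive for $c$. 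The only analytic input is Theorem~\ref{ohsawal2}, which already handles the degeneration of $\pi^*\sigma$ internally --- exactly the difficulty you correctly note cannot be cured by passing to a resolution of $W$ and invoking Theorem~\ref{vanishing}.
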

If $h$ is smooth, this theorem was proved by Ohsawa \cite{Oh88}.

The organization of this paper is as follows.
In Section 2, we review some of standard facts on
vector bundles, singular Hermitian metrics and $L^2$ estimates.
In Section 3, we prove Theorem \ref{coh}.
The proof is based on \cite[Lemma 5]{Dem2} and \cite[Proposition 4.1.3]{Cat98}.
Since $h$ has singularities along $Z$, we apply the $L^2$ estimate only outside $Z$.
In Section 4, we prove Theorem \ref{vanishing} and \ref{injective}.
Based on \cite[Claim 1]{Fuj07},
we prove the cohomology isomorphism between the cohomology group of $K_X \otimes E(h) $ on $X$
and the $L^2$ cohomology group on $Y$ by using \v{C}ech cohomology.
From this isomorphism, it is easy to prove Theorem \ref{vanishing} and \ref{injective}.
In Section 5, we treat some applications.
We treat a singular Hermitian metric induced by holomorpic sections
and prove a generalization of Griffiths' vanishing theorem.
In Section 6, we prove a generalization of Ohsawa's vanishing theorem by using the methods of  \cite[Theorem 3.1]{Oh88} and Section 4.

{\bf Acknowledgment. } 
The author would like to thank his supervisor Prof. Shigeharu Takayama for helpful comments and enormous support.
He would like to thank Genki Hosono and Takahiro Inayama for useful comments
about the applications in Section 5.
This work was supported by the Grant-in-Aid for Scientific
Research (KAKENHI No.20284333) and the Grant-in-Aid for JSPS fellows. 
This work was supported by the Program for Leading Graduate Schools, MEXT, Japan.

\section{Preliminary}
\subsection{Hermitian metrics on vector bundles}

We briefly explain definitions and notations of smooth Hermitian metrics of vector bundles.

We will denote by $(X , \omega)$ a compact \kah manifold and denote by $E$ 
a holomorphic vector bundle of rank $r$ on $X$. 
For any point $x \in X$, we take a system of local coordinate $( V ; z_1 , \dots , z_n)$
near $x$. 
Let $h$ be a smooth metric on $E$ and $e_1, \dots ,e_r$ be a local orthogonal holomorphic frame of $E$ near $x$.
We denote by
$$
\sqrt{-1} \Theta_{E , h} = 
\sqrt{-1} \sum_{1 \le j,k  \le n , \, 1 \le \lambda , \mu \le r}
c_{jk \lambda \mu} \,
dz_j \wedge d \bar{z}_k
\otimes
e_{\lambda}^{*} \otimes e_{\mu}
$$
the Chern curvature tensor.
For any
$u = \sum_{1 \le j  \le n , \, 1 \le \lambda \le r}u_{j \lambda}dz_j \otimes e_{\lambda} \in T_{x}X \otimes E_x$, we denote by
$$
\theta_{E , h}(u) =
 \sum_{1 \le j,k  \le n , \, 1 \le \lambda , \mu \le r}
c_{jk \lambda \mu} u_{j \lambda} \bar{u}_{k \mu}
$$
and
$$
\theta_{ \omega \otimes id_{E} }(u)
=
\sum_{1 \le j,k  \le n , \, 1 \le \lambda \le r}
\omega_{jk} u_{j \lambda} \bar{u}_{k \lambda},
$$
where $\omega = \sqrt{-1} \sum_{1 \le j, k \le n} \omega_{jk} dz_j \wedge d \bar{z}_k$.

\begin{dfn}\cite[Chapter 7 \S 6]{Dem}
For any real number $C$,
we write $\sqrt{-1} \Theta_{E , h} \ge C \omega \otimes id_{E} $ {\it in the sense of Nakano} if
$\theta_{E , h}(u) - C \theta_{ \omega \otimes id_{E} }(u) \ge 0$ for any $u \in TX \otimes E$.
\end{dfn}

We review definitions of singular Hermitian metrics.
For more details, we refer the reader to \cite[Section 2]{Paun16}.
Let $H_r$ be the set of $r \times r$ semipositive definite Hermitian matrixs
and $\bar{H}_r$ be the space of semipositive, possibly unbounded Hermitian forms on
$\mathbb{C}^r$.

\begin{dfn} \cite[Definiton 2.8 and Definition 2.9]{Paun16}
\begin{enumerate}
\item The {\it singular Hermitian metric} $h$ on $E$ is defined to be a locally measurable map with
values in $\bar{H}_r$ such that $0 < \det h < +\infty$ almost everywhere.
\item A singular Hermitian metric $h$ on $E$ is said to be {\it negatively curved} if the function
$\log |v|^{2}_{h}$ is plurisubharmonic for any local section $v$ of $E$.
\item A singular Hermitian metric $h$ on $E$ is said to be {\it positively curved} if the dual
singular Hermitian metric $h^{*} = {}^t\! h^{-1}$ on the dual vector bundle $E^{*}$ is negatively
curved.
\end{enumerate}
\end{dfn}

We prove the following lemma of a positively curved singular Hermitian metric.

\begin{lem}
\label{keylemma}
For any point $x \in X$, we take a system of local coordinate $( V ; z_1 , \dots , z_n)$
near $x$ and take a local holomorphic frame $e_1, \dots ,e_r$ of $E$ on $V$.
Let $U \subset \subset V$ be an open set near $x$.
We assume there exists a continuous function $\zeta$ on $X$ such that $h e^{-\zeta}$ is a
positively curved singular Hermitian metric on $E$.
Then there exists a positive number $M_U$ such that for any $u \in H^0( V , E)$
$$
| u |^{2}_{h} \ge M_U \sum_{1 \le i \le r} |u_i|^2
$$
holds on $U$, where $u = \sum_{1 \le i \le r} u_ie_i$.
\end{lem}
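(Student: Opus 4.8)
The plan is to pass to the dual metric, where the hypothesis ``positively curved'' turns into plurisubharmonicity of a family of functions, and then to exploit that plurisubharmonic functions are locally bounded above.

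First I would set $g:=he^{-\zeta}$, which by assumption is a positively curved singular Hermitian metric on $E$. Since $\zeta$ is continuous and $\overline{U}$ is compact, $e^{\zeta}\ge m:=e^{\min_{\overline{U}}\zeta}>0$ on $U$, so $|u|_h^2=e^{\zeta}|u|_g^2\ge m\,|u|_g^2$, and it suffices to bound $|u|_g^2$ from below on $U$. By definition the dual metric $g^{*}={}^t g^{-1}$ on $E^{*}$ is negatively curved, i.e.\ $\log|v|_{g^{*}}^2$ is plurisubharmonic for every local section $v$ of $E^{*}$. Let $e_1^{*},\dots,e_r^{*}$ be the frame of $E^{*}$ on $V$ dual to $e_1,\dots,e_r$; these are holomorphic sections of $E^{*}$ on $V$, so each $\log|e_i^{*}|_{g^{*}}^2$ is plurisubharmonic on $V$.

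Next, since a plurisubharmonic function is upper semicontinuous and nowhere $+\infty$, it attains a finite maximum on the compact set $\overline{U}\subset V$; hence there are constants $C_1,\dots,C_r>0$ with $|e_i^{*}|_{g^{*}}^2\le C_i$ on $U$ for every $i$. On the other hand $\det g=e^{-r\zeta}\det h>0$ almost everywhere, so for almost every point of $U$ the form $g$ is positive definite, and at such a point the standard duality between a Hermitian inner product and its dual norm (valid precisely because $g^{*}={}^tg^{-1}$) gives, for $u=\sum_i u_ie_i$,
$$
|u|_g^2=\sup_{0\ne\xi\in E^{*}}\frac{|\xi(u)|^2}{|\xi|_{g^{*}}^2}\ \ge\ \frac{|u_i|^2}{|e_i^{*}|_{g^{*}}^2}\ \ge\ \frac{|u_i|^2}{C_i}\qquad(1\le i\le r),
$$
where I used $e_i^{*}(u)=u_i$. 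Summing over $i$ yields $r\,|u|_g^2\ge\sum_i|u_i|^2/C_i\ge(\max_i C_i)^{-1}\sum_i|u_i|^2$.

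Combining the two estimates gives $|u|_h^2\ge M_U\sum_i|u_i|^2$ on $U$ with $M_U:=m/(r\max_i C_i)$, which is the claim, the constant depending only on $U$, on $g$ and on the chosen frame, not on $u$. The only point that needs care, and which I regard as the crux, is bookkeeping of where the matrix identities hold: the argument produces a genuine positive-definite $g$, and the duality formula, only on the full-measure set $\{0<\det h<+\infty\}$, which is exactly where $|u|_h^2$ is being compared; and one must remember that the positivity hypothesis controls $g^{*}$ rather than $g$ directly, which is why the passage to $E^{*}$ is essential. Everything else reduces to the local boundedness above of plurisubharmonic functions.
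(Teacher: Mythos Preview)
Your proof is correct and follows essentially the same route as the paper's: pass to $g=he^{-\zeta}$, use the duality $|u|_g=\sup_{\xi}|\xi(u)|/|\xi|_{g^{*}}$ with $\xi=e_i^{*}$, bound $|e_i^{*}|_{g^{*}}$ above on $\overline{U}$ by plurisubharmonicity of the dual norm, and convert back via the continuity of $\zeta$. The only cosmetic differences are that the paper treats one component at a time (``we may assume $u=u_1e_1$'') and cites the duality formula from \cite{HPS17}, while you carry all $r$ components and are more explicit about the almost-everywhere set where $g$ is genuinely positive definite.
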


\begin{proof}
We may assume $u = u_1e_1$.
By \cite[Chapter 16]{HPS17}, we obtain 
$$ 
 | u |_{he^{-\zeta}} (z) = \sup_{  f \in E^{*}_{z} } \frac{ | f(u) |(z) }{   | f |_{ (he^{-\zeta} )^{*} } } 
\ge
\frac{  |e^{*}_1 (u)  |(z)}{   |e^{*}_1 |_{ (h e^{-\zeta} )^{*} }  } 
=
\frac{  |u_1| (z) }{   |e^{*}_1|_{ (h e^{-\zeta} )^{*} }  }
$$
for any $z \in V$.
Since $h e^{-\zeta}$ is positively curved, $ |e^{*}_1 |_{ (h e^{-\zeta} )^{*} } $ is a plurisubharminic function
on $V$.
Therefore $ |e^{*}_1 |_{ (h e^{-\zeta} )^{*} } $ is bounded above on $U$.
We take a positive number $M_1$ such that $ |e^{*}_1 |_{ (h e^{-\zeta} )^{*} }  \le M_1$,
then we have $| u |_{he^{-\zeta}} \ge \frac{ |u_1| }{ M_1}$.
Since $e^{\zeta}$ is a positive continuous function, we can take a positive number $M$ such that 
$ e^{\zeta} \ge M$ on $X$. 
We put $M_{U} \coloneqq \frac{M^2}{M^{2}_1}$ and we obtain
$$
 | u |^{2}_{h}
 = 
 | u |^{2}_{he^{-\zeta}} e^{2\zeta}
 \ge
  M_{U} |u_1|^2,
$$
which completes the proof.
\end{proof}

\subsection{$L^2$ estimates and harmonic integrals on complete \kah manifolds}

We need an $L^2$ estimate on a complete \kah manifold.
Let $Y$ be a complete \kah manifold, $\omega'$ be a (not necessarily complete) \kah form
and $(E , h)$ be a vector bundle with a smooth Hermitian metric.
The $L^2$ space 
$L^{2}_{ n , q }( Y , E )_{ \omega' , h} $
is defined by the set of $ E $-valued $(n,q)$ forms with measurable coefficients on  Y 
such that $\int_{Y} | f |^2_{\omega' , h } dV_{ \omega'} < + \infty $,
where $dV_{\omega'} \coloneqq \omega'^{n}/n!$ is a volume form on $Y$.
\begin{thm}
\cite[Chapter 7  \S 7 and Chapter 8 \S 6]{Dem}
\cite[Lemme 3.2 and Th\'eor\`eme 4.1]{Dem82}
\label{estimate}
Under the conditions stated above,
we also assume that there exists a positive number $\epsilon > 0$ such that 
$\sqrt{-1}\Theta_{E , h} \ge \epsilon \omega' \otimes \textit{Id}_{E} $ in the sense of Nakano.
Then for any $q \ge 1$ and any $g \in L^{2}_{ n , q }( Y , E )_{ \omega', h}$ such that $\bar{\partial}g = 0$,
there exists $f \in L^{2}_{ n , q-1 }( Y , E )_{ \omega' , h}$  such that $\bar{\partial} f =  g $ and 
$$
\int_{Y} | f |^2_{\omega' , h } dV_{ \omega'}  \le 
\frac{1}{q \epsilon}
\int_{Y} | g|^2_{\omega', h } dV_{ \omega' }.
$$

\end{thm}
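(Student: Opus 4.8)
The plan is to follow the classical Hörmander–Andreotti–Vesentini–Demailly scheme for the $\bar\partial$-equation, in the form adapted to a possibly incomplete reference form $\omega'$ sitting on a complete \kah manifold. I would fix once and for all a complete \kah metric $\widehat{\omega}$ witnessing the completeness of $Y$, and regularize the (possibly incomplete) form by setting $\omega_\delta \coloneqq \omega' + \delta \widehat{\omega}$ for $\delta > 0$. Each $\omega_\delta$ is then a complete \kah metric, since it dominates $\widehat{\omega}$. The strategy is to establish the $L^2$ existence statement with respect to the complete metric $\omega_\delta$ with a constant depending only on $q\epsilon$ and not on $\delta$, and afterwards to let $\delta \downarrow 0$.

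The three analytic ingredients are as follows. First, the Bochner–Kodaira–Nakano identity for the Chern connection of $(E,h)$ on a \kah manifold gives $\Delta''_E = \Delta'_E + [\sqrt{-1}\Theta_{E,h}, \Lambda_{\omega_\delta}]$; testing against a compactly supported smooth $E$-valued $(n,q)$-form $u$ and discarding the nonnegative term $\|\partial_E u\|^2 + \|\partial_E^* u\|^2 \ge 0$ yields $\|\bar\partial u\|^2 + \|\bar\partial^* u\|^2 \ge \langle [\sqrt{-1}\Theta_{E,h}, \Lambda_{\omega_\delta}] u, u\rangle$, where $\langle\,\cdot\,,\,\cdot\,\rangle$ is the global $L^2$ inner product. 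Second, the Nakano positivity hypothesis enters precisely here. Writing $\sqrt{-1}\Theta_{E,h} = \epsilon\,\omega' \otimes \textit{Id}_E + (\sqrt{-1}\Theta_{E,h} - \epsilon\,\omega' \otimes \textit{Id}_E)$, the commutator $[L_{\omega'}, \Lambda_{\omega'}]$ equals $q\,\textit{Id}$ on bidegree $(n,q)$ by the Lefschetz relations, while Nakano's inequality applied to the Nakano-semipositive second term contributes a nonnegative amount; this gives the pointwise lower bound $\langle [\sqrt{-1}\Theta_{E,h}, \Lambda]u, u\rangle \ge q\epsilon\,|u|^2$. Combining, I obtain the a priori estimate $q\epsilon\,\|u\|^2 \le \|\bar\partial u\|^2 + \|\bar\partial^* u\|^2$ for compactly supported smooth $(n,q)$-forms. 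Third, the completeness of $\omega_\delta$ supplies cutoff functions whose gradients tend to zero, which I would use to extend this a priori inequality from compactly supported forms to all $u \in \mathrm{Dom}(\bar\partial) \cap \mathrm{Dom}(\bar\partial^*)$ by density in the graph norm.

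With the a priori estimate available, I would close the argument by the standard duality step. Regarding $T = \bar\partial$ on $(n,q-1)$-forms and $S = \bar\partial$ on $(n,q)$-forms as closed densely defined operators, the relation $S \circ T = 0$ gives $\mathrm{Im}\,T \subseteq \ker S$, hence $(\ker S)^\perp \subseteq \ker T^*$. Decomposing any $v \in \mathrm{Dom}(T^*)$ as $v = v_1 + v_2$ with $v_1 \in \ker S$ and $v_2 \in (\ker S)^\perp$, the hypothesis $\bar\partial g = 0$ forces $\langle g, v\rangle = \langle g, v_1\rangle$ and $T^* v = T^* v_1$, so by Cauchy–Schwarz and the a priori estimate $\|v_1\|^2 \le \tfrac{1}{q\epsilon}\|\bar\partial^* v_1\|^2$ one gets $|\langle g, v\rangle|^2 \le \tfrac{1}{q\epsilon}\,\|g\|^2\,\|T^* v\|^2$. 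Hörmander's lemma then produces, for each $\delta$, a solution $f_\delta$ of $\bar\partial f_\delta = g$ with $\int_Y |f_\delta|^2_{\omega_\delta, h}\,dV_{\omega_\delta} \le \tfrac{1}{q\epsilon}\int_Y |g|^2_{\omega_\delta, h}\,dV_{\omega_\delta}$.

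The main obstacle is the passage $\delta \downarrow 0$, namely transferring the estimate from the complete metrics $\omega_\delta$ back to the genuinely non-complete form $\omega'$ while preserving the sharp constant $1/(q\epsilon)$. Here the special role of bidegree $(n,q)$ is essential: for $(n,q)$-forms and two \kah metrics $\omega_1 \le \omega_2$ one has the pointwise comparison $|g|^2_{\omega_2}\,dV_{\omega_2} \le |g|^2_{\omega_1}\,dV_{\omega_1}$, so that $\int_Y |g|^2_{\omega_\delta, h}\,dV_{\omega_\delta} \le \int_Y |g|^2_{\omega', h}\,dV_{\omega'}$ uniformly in $\delta$. This uniform bound on the solutions $f_\delta$ lets me extract a weak $L^2$ limit $f$ with $\bar\partial f = g$, and a lower-semicontinuity argument restoring the $\omega'$-norm yields the stated estimate. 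I would verify this comparison carefully, since it is exactly where the curvature positivity relative to $\omega'$ (rather than to the complete metric $\omega_\delta$) must be reconciled with the metric used to measure norms; in practice, once the a priori estimate is in place, this limiting refinement is precisely the content of \cite[Lemme 3.2 and Th\'eor\`eme 4.1]{Dem82}, which I would invoke.
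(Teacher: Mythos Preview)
The paper does not give its own proof of this statement: Theorem~\ref{estimate} is stated as a known $L^2$ existence result and is simply attributed to \cite[Chapter~7~\S7 and Chapter~8~\S6]{Dem} and \cite[Lemme~3.2 and Th\'eor\`eme~4.1]{Dem82}, with no argument supplied. So there is nothing in the paper to compare your proposal against.

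That said, your outline is precisely the Demailly argument behind those citations: regularize the possibly incomplete $\omega'$ by $\omega_\delta = \omega' + \delta\widehat{\omega}$ with $\widehat{\omega}$ complete, run Bochner--Kodaira--Nakano plus the density of compactly supported forms on the complete manifold $(Y,\omega_\delta)$ to get the a~priori inequality, solve $\bar\partial$ by the functional-analytic duality step, and then let $\delta\downarrow 0$ using the monotonicity $|u|^2_{\omega_2,h}\,dV_{\omega_2}\le |u|^2_{\omega_1,h}\,dV_{\omega_1}$ for $(n,q)$-forms when $\omega_1\le\omega_2$. You also correctly flag the one genuine subtlety, namely that the curvature lower bound is stated relative to $\omega'$ while the a~priori estimate is run with $\Lambda_{\omega_\delta}$; the line ``$[L_{\omega'},\Lambda_{\omega'}] = q\,\textit{Id}$'' is literally true but is not quite the operator appearing in your BKN identity, and reconciling this is exactly what \cite[Lemme~3.2]{Dem82} does. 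Since you explicitly defer to that lemma at the end, the sketch is sound.
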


We use a fact of harmonic integrals to prove Theorem \ref{injective}.
For more details, we refer the reader to \cite[Section 2]{Fuj07} or \cite[Chapter 8]{Dem}.
The maximal closed extension of the $\bar{\partial}$ operator determines a densely
defined closed operator
$\bar{\partial} \colon L^{2}_{ n , q }( Y , E )_{ \omega' , h} \rightarrow L^{2}_{ n , q + 1}( Y , E )_{ \omega' , h} $.
Then we obtain the following orthogonal decomposition.
\begin{thm}\cite[Section 3]{Fuj07}, \cite[Chapter 8]{Dem}.
\label{harmonic}
$$
L^{2}_{ n , q }( Y , E )_{ \omega' , h}  =
\overline{{\rm Im}\bar{\partial}} \oplus
\mathcal{H}^{n,q}( Y , E ) \oplus
\overline{{\rm Im} \bar{\partial}^{*}_{\omega' , h}}
$$
holds, where $\bar{\partial}^{*}_{\omega' , h}$ is the Hilbert space adjoint of $\bar{\partial}$
and $\mathcal{H}^{n,q}( Y , E ) $ is the set of harmonic forms
defined by
$$
\mathcal{H}^{n,q}( Y , E ) \coloneqq 
\{ f \in L^{2}_{ n , q }( Y , E )_{ \omega' , h} \colon \bar{\partial}f =  \bar{\partial}^{*}_{\omega' , h}f = 0 \}.
$$
\end{thm}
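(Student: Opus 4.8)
The plan is to derive the stated decomposition from the standard orthogonal decomposition attached to a densely defined closed operator on a Hilbert space, so that the geometry of $Y$ enters only through the fact recorded above, namely that the maximal closed extension of $\bar{\partial}$ is a densely defined closed operator. Write $T$ for the operator $\bar{\partial}\colon L^{2}_{n,q-1}(Y,E)_{\omega',h}\to L^{2}_{n,q}(Y,E)_{\omega',h}$ and $S$ for $\bar{\partial}\colon L^{2}_{n,q}(Y,E)_{\omega',h}\to L^{2}_{n,q+1}(Y,E)_{\omega',h}$, both taken as maximal closed extensions. Then the two closed images appearing in the statement are $\overline{\mathrm{Im}\,T}$ and $\overline{\mathrm{Im}\,S^{*}}$, where $S^{*}=\bar{\partial}^{*}_{\omega',h}$ is the Hilbert space adjoint, and the harmonic space is $\mathcal{H}^{n,q}(Y,E)=\ker S\cap\ker T^{*}$.

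First I recall the abstract fact: for any densely defined closed operator $A$ between Hilbert spaces one has $(\ker A)^{\perp}=\overline{\mathrm{Im}\,A^{*}}$, hence the source space splits orthogonally as $\ker A\oplus\overline{\mathrm{Im}\,A^{*}}$. Applying this to $A=S$ gives
\[
L^{2}_{n,q}(Y,E)_{\omega',h}=\ker S\oplus\overline{\mathrm{Im}\,S^{*}},
\]
where $\ker S$ is closed because $S$ is closed.

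Next I decompose $\ker S$ further. The structural input here is $\bar{\partial}\circ\bar{\partial}=0$: if $f\in\mathrm{Dom}(T)$ then $\bar{\partial}f\in L^{2}$ and $\bar{\partial}(\bar{\partial}f)=0$ in the sense of distributions, so $Tf\in\mathrm{Dom}(S)$ with $S(Tf)=0$; thus $\mathrm{Im}\,T\subset\ker S$, and since $\ker S$ is closed, $\overline{\mathrm{Im}\,T}\subset\ker S$. Splitting off the closed subspace $\overline{\mathrm{Im}\,T}$ inside the Hilbert space $\ker S$ gives $\ker S=\overline{\mathrm{Im}\,T}\oplus\bigl(\ker S\cap(\overline{\mathrm{Im}\,T})^{\perp}\bigr)$. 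Using $(\overline{\mathrm{Im}\,T})^{\perp}=(\mathrm{Im}\,T)^{\perp}=\ker T^{*}$, the second summand is precisely $\ker S\cap\ker T^{*}=\mathcal{H}^{n,q}(Y,E)$.

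Combining the two steps yields
\[
L^{2}_{n,q}(Y,E)_{\omega',h}=\overline{\mathrm{Im}\,T}\oplus\mathcal{H}^{n,q}(Y,E)\oplus\overline{\mathrm{Im}\,S^{*}},
\]
which is the asserted identity. The mutual orthogonality of the three summands is automatic from the constructions, since $\mathcal{H}^{n,q}(Y,E)\subset\ker T^{*}=(\overline{\mathrm{Im}\,T})^{\perp}$ and $\mathcal{H}^{n,q}(Y,E)\subset\ker S=(\overline{\mathrm{Im}\,S^{*}})^{\perp}$, while $\overline{\mathrm{Im}\,T}\subset\ker S=(\overline{\mathrm{Im}\,S^{*}})^{\perp}$. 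I expect the only delicate point to be the verification that the maximal closed extensions genuinely satisfy $\mathrm{Im}\,T\subset\ker S$ at the level of the operator domains, rather than merely on smooth compactly supported forms; this is handled by the distributional characterization of $\mathrm{Dom}(\bar{\partial})$ together with $\bar{\partial}^{2}=0$, and no completeness of $\omega'$ is needed beyond what guarantees that $\bar{\partial}$ is closed and densely defined.
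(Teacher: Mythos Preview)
The paper does not give its own proof of this theorem; it is stated as a background result with citations to \cite[Section 3]{Fuj07} and \cite[Chapter 8]{Dem}. Your argument is the standard one found in those references: the decomposition is a purely Hilbert-space consequence of the facts that the maximal closed extension of $\bar{\partial}$ is densely defined and closed, that $(\ker A)^{\perp}=\overline{\mathrm{Im}\,A^{*}}$ for any such operator, and that $\bar{\partial}^{2}=0$ forces $\overline{\mathrm{Im}\,T}\subset\ker S$. Your identification of the harmonic space as $\ker S\cap\ker T^{*}$ matches the paper's definition, and your closing remark is accurate: completeness of $\omega'$ plays no role in this decomposition itself (it matters elsewhere, e.g.\ for the identification of the Hilbert space adjoint with the formal adjoint on smooth forms). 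There is nothing to correct.
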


\section{Coherentness of $E(h)$}

We prove Theorem \ref{coh}.

\begin{proof}
We may assume that $X$ is a unit ball in $\mathbb{C}^n$, $E = X \times \mathbb{C}^r$, and $\omega$ is a standard Euclidean metric.
Let $e_1, \dots ,e_r$ be a local holomorphic frame of $E$ on $X$.
We take a open ball $U \subset \subset X$.
It is enough to show that there exists a coherent sheaf $\mathcal{F}$ on $U$ such that $E(h)_x = \mathcal{F}_x$ 
for any $x \in U$.

We will denote by $\mathcal{G}$ the space of holomorphic sections $g \in H^0(U , E)$
such that $\int_{U} | g |^{2}_{h} dV_{\omega} < \infty $.
We consider the evaluation map
$\pi \colon \mathcal{G} \otimes_{ \mathbf{C} } \mathcal{O}_{U} \rightarrow E |_{U}$.
We define $\mathcal{F} \coloneqq {\rm Im} (\pi)$.
By Noether's Lemma (see \cite[Chapter 5 \S 6]{GR}), $\mathcal{F}$ is a coherent sheaf on $U$.

\begin{claim}
For any $x \in U$ and any positive integer $k$,
$$
\mathcal{F}_x + E(h)_x \cap m_{x}^{k} \cdot  E_{(x)} = E(h)_x
$$
holds, where $m_x$ is a maximal ideal of $\mathcal{O}_x$.
\label{Krull}
\end{claim}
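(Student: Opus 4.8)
The inclusion $\mathcal{F}_x + \bigl(E(h)_x \cap m_x^k\cdot E_{(x)}\bigr) \subseteq E(h)_x$ is immediate, because the generators of $\mathcal{F}$ have germs at $x$ lying in $E(h)_x$ and $E(h)_x$ is an $\mathcal{O}_x$-module. So the plan is to prove the reverse inclusion: given $f_x\in E(h)_x$, I will produce $g_x\in\mathcal{F}_x$ with $f_x-g_x\in m_x^k\cdot E_{(x)}$; since $f_x-g_x\in E(h)_x$ automatically, this yields the claimed equality. Following the schemes of \cite[Lemma 5]{Dem2} and \cite[Proposition 4.1.3]{Cat98}, the idea is to truncate a local representative of $f_x$ near $x$ and to correct the truncation by an $L^2$ solution of a $\bar\partial$-equation that is forced to vanish to order $k$ at $x$; the correction is carried out only over $X\setminus Z$, where $h$ is smooth.

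In detail, I would fix a holomorphic representative $f\in H^0(B,E)$ on a ball $B=B(x,2\rho)\subset\subset U$ with $\int_B|f|^2_h\,dV_\omega<\infty$, a cutoff $\theta\in C^\infty_c(B)$ with $\theta\equiv1$ on $B(x,\rho)$, and put $\tilde f:=\theta f$ (a smooth section of $E$ on $U$), $\alpha:=\bar\partial\tilde f=\bar\partial\theta\cdot f$. Then $\alpha$ is a smooth $E$-valued $(0,1)$-form supported in the shell $\overline{B(x,2\rho)}\setminus B(x,\rho)$, with $|\alpha|^2_h$ dominated there by a constant times $|f|^2_h$, hence integrable. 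Trivialising $K_U$ by $\Omega:=dz_1\wedge\cdots\wedge dz_n$, the form $\alpha\wedge\Omega$ is a $\bar\partial$-closed $E$-valued $(n,1)$-form and $\bar\partial(\tilde f\,\Omega)=\alpha\wedge\Omega$. Next I would work on $Y:=U\setminus(Z\cup\{x\})$, which carries a complete \kah metric $\hat\omega$ (a standard construction for the complement of an analytic subset of a pseudoconvex domain; see \cite{Dem}) and on which $h$ is smooth. Choosing $A$ with $\epsilon:=C+A>0$ and setting
$$
H:=h\,e^{-\psi},\qquad \psi:=A|z|^2+2(n+k)\log|z-x|,
$$
hypothesis (3) of Theorem \ref{coh} together with $\deldel\psi\ge A\omega$ on $Y$ (the logarithmic term is plurisubharmonic, so its $\deldel$ is $\ge0$ there) gives $\sqrt{-1}\Theta_{E,H}\ge\epsilon\,\omega\otimes\mathrm{Id}_E$ on $Y$ in the sense of Nakano. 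Since $e^{-A|z|^2}$ and $|z-x|^{-2(n+k)}$ are bounded on the shell, $\alpha\wedge\Omega\in L^2_{n,1}(Y,E)_{\omega,H}$, and Theorem \ref{estimate} — applied with the complete \kah manifold $Y$ and the (incomplete) \kah form $\omega'=\omega$ — furnishes a solution $u$ of $\bar\partial u=\alpha$ on $Y$, identified with a section of $E$ via $\Omega$, with $\int_Y|u|^2_h\,e^{-A|z|^2}|z-x|^{-2(n+k)}\,dV_\omega<\infty$.

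To conclude, I would invoke Lemma \ref{keylemma}: there is $M_U>0$ with $|u|^2_h\ge M_U\sum_i|u_i|^2$ pointwise on $U$, so (as $e^{-A|z|^2}$ and $|z-x|^{-2(n+k)}$ are bounded below by positive constants on $Y$) one gets $\sum_i\int_Y|u_i|^2\,dV_\omega<\infty$ together with $\sum_i\int_{B(x,\rho)}|u_i|^2\,|z-x|^{-2(n+k)}\,dV_\omega<\infty$. Since $\tilde f$ is bounded on $U$, the components of $g:=\tilde f-u$ are square-integrable holomorphic functions on $Y$, hence extend holomorphically across the analytic set $Z\cup\{x\}$, so $g\in H^0(U,E)$; and $|g|^2_h$ is dominated by $|\tilde f|^2_h+|u|^2_h\in L^1(U)$, so $g_x\in\mathcal{F}_x$. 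On $B(x,\rho)$ we have $\tilde f=f$, hence $u=f-g$ is holomorphic there, and the weighted integrability of its components forces them to vanish at $x$ to order $\ge k$ (an elementary power-series estimate), i.e. $u_x\in m_x^k\cdot E_{(x)}$. Thus $f_x=g_x+u_x$ with $g_x\in\mathcal{F}_x$ and $u_x=f_x-g_x\in E(h)_x\cap m_x^k\cdot E_{(x)}$, proving the claim.

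The delicate point throughout is the interaction of the singular metric $h$ with the $L^2$ machinery. First, Nakano positivity must be produced \emph{with respect to the fixed form $\omega$}, not with respect to the auxiliary complete metric $\hat\omega$ — this is precisely why Theorem \ref{estimate} is formulated for a possibly incomplete $\omega'$, and why $h$ is twisted only by plurisubharmonic weights built from $\omega$. Second, the $\bar\partial$-estimate controls $|u|_h$, which a priori says nothing about the holomorphic components $u_i$ near $Z$; upgrading this to genuine $L^2$ control of the $u_i$ — needed both to extend $g$ across $Z\cup\{x\}$ and to read off the vanishing order of $u$ at $x$ — is exactly what hypothesis (2), through Lemma \ref{keylemma}, supplies. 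The remaining verifications are routine.
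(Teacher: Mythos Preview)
Your proof is correct and follows the same strategy as the paper's: cut off a local representative of $f_x$, solve a $\bar\partial$-equation with a logarithmic weight centered at $x$ on the locus where $h$ is smooth, and then use Lemma~\ref{keylemma} together with the Riemann extension theorem to produce $g\in\mathcal{G}$ with $f_x-g_x\in m_x^k\cdot E_{(x)}$. The only cosmetic difference is that you also excise the point $x$ from the domain (working on $U\setminus(Z\cup\{x\})$) so that the twisted metric $he^{-\psi}$ is literally smooth and Theorem~\ref{estimate} applies verbatim, whereas the paper works on $X\setminus Z$ and allows the plurisubharmonic weight $\varphi_k$ to be singular at $x$.
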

We postpone the proof of Claim \ref{Krull} and conclude the proof of Theorem \ref{coh}.
We fix $x \in U$. By the Artin-Rees lemma, there exists a positive integer $l$ such that 
$$
E(h)_x \cap m_{x}^{k} \cdot  E_{(x)} = m_{x}^{k-l} ( E(h)_x \cap m_{x}^{l} \cdot  E_{(x)})
$$
holds for any $k > l$.
Therefore by Claim \ref{Krull}, we have
$$
E(h)_x = \mathcal{F}_x + E(h)_x \cap m_{x}^{k} \cdot  E_{(x)} 
\subset
\mathcal{F}_x + m_{x} \cdot E(h)_x
\subset
E(h)_x.
$$
By Nakayama's lemma, we obtain $E(h)_x = \mathcal{F}_x$, which completes the proof.
\end{proof}
We now prove Claim \ref{Krull}.
\begin{proof}
It is easy to check that $ \mathcal{F}_x + E(h)_x \cap m_{x}^{k} \cdot  E_{(x)}  \subset E(h)_x $,
therefore we show that $E(h)_x  \subset \mathcal{F}_x + E(h)_x \cap m_{x}^{k} \cdot  E_{(x)} $.

We take $f = \sum_{i} f_i e_i \in E(h)_x$ then there exists an open neighborhood $W \subset \subset U$ near $x$ such that $f_i$ is a holomorphic function on $W$ and 
$\int_{W} |f|^{2}_{h} dV_{\omega} < +\infty$.
Let $\rho$ be a cut-off function on $W$.
We note that $\bar{\partial} (\rho f)$ is an $E$-valued $( 0 , 1)$ smooth form such that 
$\int_{X} | \rho f |^{2} _{\omega , h} dV_{\omega} < +\infty$.
We define the plurisubharmonic function $\varphi_k$ to be
$\varphi_k (z)= (n + k)\log |z-x|^{2} + C|z|^2 $ such that
$$
\sqrt{-1} \Theta_{E , h} + \deldel \varphi_k \otimes \textit{Id}_{E} 
\ge \omega \otimes \textit{Id}_{E} \mbox{\,on } X \setminus Z 
\mbox{ in the sense of Nakano,}
$$
where $C$ is some positive constant.
Since $\rho$ is a cut-off function, we obtain
$\int_{X} | \bar{\partial} (\rho f) |^{2} _{\omega , h} e^{-\varphi_k}dV_{\omega} < +\infty$.

Since $X \setminus Z$ is complete by \cite[Th\'eor\`eme 0.2]{Dem82},
there exists 
an $E$-valued $(0 , 0)$ form
$F = \sum_{i}F_i e_i$ 
on $X \setminus Z $ such that 
$$
\int_{X \setminus Z} | F |^{2} _{h} e^{-\varphi_k} dV_{\omega} \le 
\int_{X} | \bar{\partial} (\rho f) |^{2} _{ \omega , h} e^{-\varphi_k} dV_{\omega} < +\infty
\mbox{ \,\, and \,\,} 
\bar{\partial} F= \bar{\partial} (\rho f)
 $$
by Theorem \ref{estimate}.
Here we may regard $\bar{\partial} (\rho f)$ as an $(n , 1)$ form on $X$ by using
$dz^1 \wedge \cdots \wedge dz^n$.

Let $G \coloneqq \rho f - F = \sum_{i}G_i e_i$, which is an $E$-valued $ (0 , 0)$
form on $X \setminus Z$. We obtain 
$$ 
\int_{X \setminus Z} |G|^{2}_{h} dV_{\omega} < +\infty
\mbox{ \,\, and \,\,} 
\bar{\partial} G = 0.
$$
By Lemma \ref{keylemma}
we have $\sum_{i}\int_{U \setminus Z} |G_i|^{2} dV_{\omega} < +\infty$,
therefore $G_i$ extends on $U$ and $G_i$ is holomorphic on $U$
 by the Riemann extension theorem.
Hence we obtain $G \in \mathcal{G}$ and $G_x \in \mathcal{F}_x$.

Let $W'$ be the set of interior points in $\{ z \in U : \rho(z) =1 \}$, then we have $F = f - G$ on $W' \setminus Z$. 
Then $F$ extends on $W'$ and $F$ is holomorphic 
on $W'$. It is obvious that $F_x \in E(h)_x$ from $f_x \in E(h)$ and
$G_x \in \mathcal{F}_x \subset E(h)_x$.
By $\int_{X \setminus Z} | F |^{2} _{h} e^{-\varphi_k} dV_{\omega} < + \infty$ and Lemma 
\ref{keylemma},
we have 
$$\sum_{i} \int_{ W' } | F_i |^{2} e^{-(n + k)\log |z-x|^{2}} dV_{\omega} < + \infty . $$
Therefore we obtain $(F_{i})_{x} \in m_{x}^k$ and $ F_x \in m_{x}^{k} \cdot E_{(x)}$.

Thus we have $f_x = G_x + F_x \in  \mathcal{F}_x + E(h)_x \cap m_{x}^{k} \cdot  E_{(x)}$,
which completes the proof of Claim \ref{Krull}.
\end{proof}

\section{Vanishing theorems and injectivity theorems}

Let $(X , \omega)$ be a compact \kah manifold and $(E , h)$ be a holomorphic vector bundle  with a singular Hermitian metric on $X$.
We assume the conditions (1) -- (3) in Theorem \ref{coh}.
We will denote $Y \coloneqq X \setminus Z$.
By \cite[Section 3]{Fuj07}, there exists a complete \kah form $\omega'$ on $Y$ such that $ \omega' \ge \omega$ on $Y$.
We study the cohomology group $H^q( X , K_X \otimes E(h) ) $.
\begin{thm}

\label{cohomology}
Under the conditions stated above,
we obtain the following isomorphism
$$
H^q( X , K_X \otimes E(h) ) 
\cong 
\frac{L^{2}_{n , q}( Y , E ) _{\omega' , h} \cap {\rm Ker} \bar{\partial} }{{\rm Im} \bar{\partial} }
$$
for any $q \ge 0$.
\end{thm}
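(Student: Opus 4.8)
The plan is to produce a fine resolution of $K_X \otimes E(h)$ by sheaves of locally square-integrable $(n,\bullet)$-forms on $Y = X \setminus Z$, and to read off the isomorphism by comparing the \v{C}ech complex of the coherent sheaf $K_X \otimes E(h)$ with a \v{C}ech--Dolbeault double complex, in the spirit of \cite[Claim 1]{Fuj07}. Concretely, for an open set $U \subseteq X$ let $\mathcal{K}^q(U)$ be the space of $E$-valued $(n,q)$-forms $f$ on $U \cap Y$ such that $f$ and $\bar\partial f$ are locally $L^2$ near every point of $U$ with respect to $\omega'$ and $h$; with $\bar\partial$ this is a complex of sheaves $\mathcal{K}^\bullet$ on $X$. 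The claim follows once one knows that $0 \to K_X \otimes E(h) \to \mathcal{K}^0 \to \mathcal{K}^1 \to \cdots$ is an exact complex of fine sheaves and that $\Gamma(X, \mathcal{K}^\bullet)$ is the $L^2$-Dolbeault complex on the right-hand side.

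I would carry this out in the following order. (i) By Theorem \ref{coh} the sheaf $E(h)$ is coherent, so for a suitable finite Stein cover $\mathcal{U}$ of $X$ (with Stein intersections) \v{C}ech cohomology computes $H^\bullet(X, K_X \otimes E(h))$ and this sheaf has vanishing higher cohomology on every member of $\mathcal{U}$. (ii) The sheaves $\mathcal{K}^q$ are $\mathcal{C}^\infty_X$-modules: multiplication by a smooth cutoff $\chi$ preserves the local $L^2$ conditions because $\chi$ is bounded and, using $\omega' \ge \omega$, the $\omega'$-norm of $\bar\partial\chi$ is dominated by its $\omega$-norm; hence $\mathcal{K}^q$ is fine, so $\check{H}^{>0}(\mathcal{U}, \mathcal{K}^q) = 0$ and $\check{H}^0(\mathcal{U}, \mathcal{K}^q) = \Gamma(X, \mathcal{K}^q)$. (iii) I would identify $\ker(\bar\partial \colon \mathcal{K}^0 \to \mathcal{K}^1)$ with $K_X \otimes E(h)$: a $\bar\partial$-closed element of $\mathcal{K}^0$ is an $E$-valued holomorphic $n$-form on $U \cap Y$ that is locally $L^2$ with respect to $h$; by Lemma \ref{keylemma} its frame coefficients are locally square-integrable, so by the Riemann extension theorem it extends to a holomorphic section of $K_X \otimes E$ over $U$, and since for an $(n,0)$-form the quantity $|f|^2_h\,dV$ is independent of the Kähler metric, the local $L^2$-condition is exactly membership in $K_X \otimes E(h)$; the converse is immediate. (iv) I would prove exactness of $\mathcal{K}^\bullet$ in positive degrees, i.e. the local $\bar\partial$-Poincar\'e lemma with $L^2$ bounds: away from $Z$ this is the classical $L^2$-Dolbeault lemma on a small ball with a strictly plurisubharmonic weight, and near a point $p \in Z$ one applies Theorem \ref{estimate} on a small ball minus $Z$ — complete K\"ahler by \cite[Th\'eor\`eme 0.2]{Dem82} — after twisting $h$ by an auxiliary plurisubharmonic weight to create the required Nakano positivity, exactly as in the proof of Theorem \ref{coh}, using the monotonicity $|f|^2_{\omega'}\,dV_{\omega'} \le |f|^2_{\omega}\,dV_{\omega}$ for $(n,q)$-forms to pass between $\omega'$ and the metric against which the curvature hypothesis is stated. (v) Since $X$ is compact, a form that is locally $L^2$ near every point of $X$ is globally $L^2$ on $Y$, so $\Gamma(X, \mathcal{K}^q) = \{ f \in L^2_{n,q}(Y,E)_{\omega',h} : \bar\partial f \in L^2_{n,q+1}(Y,E)_{\omega',h}\}$, whose $\bar\partial$-cohomology in degree $q$ is $\bigl(L^2_{n,q}(Y,E)_{\omega',h} \cap \ker\bar\partial\bigr)/\operatorname{Im}\bar\partial$. (vi) Feeding (i)--(v) into the two spectral sequences of $C^p(\mathcal{U}, \mathcal{K}^q)$: one filtration gives $H^\bullet$ of the total complex $\cong \check{H}^\bullet(\mathcal{U}, K_X \otimes E(h)) \cong H^\bullet(X, K_X \otimes E(h))$, and the other gives $H^\bullet$ of the total complex $\cong H^\bullet_{\bar\partial}(\Gamma(X,\mathcal{K}^\bullet))$, which is the right-hand side; comparing proves the isomorphism for every $q \ge 0$.

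The main obstacle is step (iv), the $\bar\partial$-Poincar\'e lemma with $L^2$ estimates in a neighborhood of $Z$: one must solve $\bar\partial f = g$ near a point of $Z$ keeping $f$ locally square-integrable, having at one's disposal only the curvature bound on $X \setminus Z$, the completeness of $X \setminus Z$, and a metric $\omega'$ that degenerates along $Z$. This is exactly where Theorem \ref{estimate}, the choice of plurisubharmonic weights as in Section 3, and the metric-independence of the $L^2$-norm on twisted $(n,\bullet)$-forms must be combined; once this local statement is in hand, the rest of the argument is a formal consequence of coherence, fineness, and compactness.
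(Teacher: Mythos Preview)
Your proposal is correct and rests on exactly the same analytic ingredients as the paper: coherence of $E(h)$, the local $L^2$ solvability of $\bar\partial$ on $U\setminus Z$ obtained from Theorem \ref{estimate} after twisting by a bounded psh weight (your step (iv) is the paper's Claim \ref{modl2}), the monotonicity $|u|^2_{\omega',h}\,dV_{\omega'}\le |u|^2_{\omega,h}\,dV_{\omega}$ for $(n,q)$-forms with equality for $q=0$ (Claim \ref{inequality}), and Lemma \ref{keylemma} plus Riemann extension to pass across $Z$. The only difference is organizational: the paper writes out explicit mutually inverse maps $\alpha,\beta$ between \v{C}ech and $L^2$-Dolbeault cohomology by hand (the usual zig-zag with a partition of unity), whereas you encode the same zig-zag abstractly by exhibiting $\mathcal{K}^\bullet$ as a fine resolution of $K_X\otimes E(h)$ and comparing the two spectral sequences of the \v{C}ech--Dolbeault double complex. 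Your packaging is cleaner and makes the role of each hypothesis more transparent; the paper's explicit maps, on the other hand, are reusable verbatim in the later proof of Theorem \ref{ohsawa}, where the same zig-zag is run with $\pi^*\sigma$-weighted norms.
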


\begin{proof}
The proof will be divided into three steps.

Step 1 Set up.

Let $\mathcal{U} = \{ U_j \}_{j \in \Lambda}$ be a finite Stein cover of $X$.
By Theorem \ref{coh}, the sheaf cohomology $H^q( X , K_X \otimes E(h) ) $ is
isomorphic to the \v{C}ech cohomology $H^q( \mathcal{U} , K_X \otimes E(h) ) $.
If necessarily we take $U_j$ small enough,
we may assume that there exist 
a Stein open set $V_j$, 
a smooth plurisubharmonic function $\varphi_j$ on $V_j$
and a positive number $C_j > 0$
such that 
\begin{enumerate}
\item $ U_j \subset \subset V_j$,
\item $C_{j}^{-1} < e^{-\varphi_j} < C_j $ on $V_j$, and 
\item $\sqrt{-1}\Theta_{E , h} + \deldel \varphi_j \ge \omega' \otimes \textit{Id}_E$ on $V_j \setminus Z$
\end{enumerate}
for any $j \in \Lambda$.
We put $U_{i_{0} i_{1} \dots i_{q} } \coloneqq U_{i_0} \cap U_{i_1} \cap \dots \cap U_{i_q}$, which is a Stein open set.

With these conditions above, it is easily to check the following two claims.

\begin{claim}\cite[Remark 2.19]{Fuj07}
\label{inequality}
For any $E$-valued $(n , q)$ form $u$ on $Y$ with measurable coefficients, 
$ | u |^2_{\omega'  , h } dV_{ \omega'  } 
\le
| u |^2_{\omega  , h } dV_{ \omega} 
$
holds. If $q = 0$,
$ | u |^2_{\omega'  , h } dV_{ \omega'  } 
=
| u |^2_{\omega  , h } dV_{ \omega} 
$
holds.
\end{claim}

\begin{claim}
\label{modl2}
For any $q \ge 1$ and any $g \in L^{2}_{ n , q }( U_{i_{0} i_{1} \dots i_{q} } \setminus Z  , E )_{ \omega' , h}$ such that $\bar{\partial} g = 0$,
there exists $f \in L^{2}_{ n , q-1 }( U_{i_{0} i_{1} \dots i_{q} } \setminus Z  , E )_{ \omega' , h}$  such that 
$\bar{\partial}  f =  g $ and 
$$
\int_{U_{i_{0} i_{1} \dots i_{q} } \setminus Z  } | f |^2_{\omega'  , h } dV_{ \omega'  }  \le 
C'^2
\int_{U_{i_{0} i_{1} \dots i_{q} } \setminus Z } | g|^2_{\omega'  , h } dV_{ \omega'  },
$$
where $C' \coloneqq max_{ i \in \Lambda } C_i$.
\end{claim}

Step 2 Construction of a homomorphism from \v{C}ech cohomology to Dolbeault cohomology.

We fix $c = \{ c_{i_{0} i_{1} \dots i_{q} }\} \in H^q( \mathcal{U} , K_X \otimes E(h) )$.
By the definition of \v{C}ech cohomology, we have 
\begin{enumerate}
\item $c_{i_{0} i_{1} \dots i_{q} } \in H^0( U_{i_{0} i_{1} \dots i_{q} } , K_X \otimes E(h) ) $ and  
\item $ \delta c \coloneqq \sum_{k = 0}^{q+1} (-1)^k c_{ i_{0} i_{1} \dots \check{i_k} \dots i_{q+1} }    |_{U_{i_{0} i_{1} \dots i_{q+1}}  } =0$.
\end{enumerate}

Let $\{ \rho_{i} \}_{i \in \Lambda}$ be a partition of unity subordinate to $\mathcal{U}$.
We define an $E$-valued form $b_{i_{0} i_{1} \dots i_{k} } $ inductively by 
$$
b_{i_{0} i_{1} \dots i_{q-1} } \coloneqq \sum_{j \in \Lambda} \rho_{j} c_{j i_{0} i_{1} \dots i_{q-1} } 
 \mbox{\,\, and \,\,} 
b_{i_{0} i_{1} \dots i_{k} } \coloneqq \sum_{j \in \Lambda} \rho_{j} 
\bar{\partial} b_{j i_{0} i_{1} \dots i_{k} } .
$$
We point out
 $$
 \delta \{ b_{i_{0} i_{1} \dots i_{q-1} }  \} = c,
 \, \,
\delta \{ \bar{\partial} b_{i_{0} i_{1} \dots i_{q-1} }  \}  =0, 
\mbox{\,\, and \,\,}
\delta \{ b_{i_{0} i_{1} \dots i_{k} }  \}  = \{ \bar{\partial} b_{i_{0} i_{1} \dots i_{k+1} }  \}
$$
hold (see \cite[Lemma 4.2] {Mat16}).

Therefore we obtain $\bar{\partial} b_{i_0} |_{U_{i_0} \setminus Z} $,
which is an $E$-valued $(n , q)$
$\bar{\partial}$-closed form on $U_{i_0} \setminus Z$.
Since we have
$$
\delta \{ \bar{\partial} b_{i_0} \}= 0 
\mbox{\,\, and \,\,}
\int_{U_{i_{0} } \setminus Z  } | \bar{\partial} b_{i_0} |^2_{\omega'  , h } dV_{ \omega'  }  
\le
\int_{U_{i_{0} } } | \bar{\partial} b_{i_0} |^2_{\omega  , h } dV_{ \omega} 
<
+\infty
$$ 
by Claim \ref{inequality}, we can define $\alpha (c) \coloneqq \{ \bar{\partial} b_{i_0} \} \in
L^{2}_{n , q}( Y , E ) _{\omega' , h} \cap {\rm Ker} \bar{\partial}  $.
By the above construction, we obtain the homomorphism 
$$ \alpha \colon H^q( \mathcal{U} , K_X \otimes E(h) )  \rightarrow \frac{L^{n , q}_2( Y , E ) _{\omega' , h} \cap {\rm Ker} \bar{\partial} }{{\rm Im} \bar{\partial} } .$$

Step 3 Construction of a homomorphism from Dolbeault cohomology to \v{C}ech cohomology.

We fix $u \in L^{2}_{n , q}( Y , E ) _{\omega' , h} \cap {\rm Ker} \bar{\partial} $ and define 
$D \coloneqq \int_{Y } | u |^2_{\omega'  , h } dV_{ \omega'  } < + \infty $.
By Claim \ref{modl2}, there exists $v_{i_0} \in L^{2}_{n , q-1}( U_{i_0} \setminus Z, E ) _{\omega' , h} $ such that 
$$
\bar{\partial} v_{i_0} = u |_{U_{i_0} \setminus Z }
\mbox{\,\, and \,\,}
\int_{U_{i_0} \setminus Z } | v_{i_0} |^2_{\omega'  , h } dV_{ \omega'  }  \le C'^{2} D.
$$
We put $u^1 \coloneqq \delta \{ v_{i_0} \}$.
From $\bar{\partial} u^1 = 0$, there exists 
$v_{i_{0} i_{1} } \in L^{2}_{n , q-1}( U_{ i_{0} i_{1} } \setminus Z, E ) _{\omega' , h} $
such that 
$$
\bar{\partial} v_{i_{0} i_{1} }  = u^{1}_{ i_{0} i_{1}  } 
\mbox{\,\, and \,\,}
\int_{U_{i_{0} i_{1}} \setminus Z } | v_{i_{0} i_{1} } |^2_{\omega'  , h } dV_{ \omega'  }  \le 2C'^{2} D
$$
by Claim \ref{modl2}.
We put $u^2 \coloneqq \delta \{ v_{i_{0} i_{1}} \}$ and we have $\bar{\partial} u^2 = 0$.

By repeating this procedure, we obtain 
$v_{i_{0} i_{1} \dots i_{q-1} } \in L^{2}_{n , 0}( U_{ i_{0} i_{1} \dots i_{q-1} } \setminus Z, E ) _{\omega' , h}$
and $u^q = \delta \{v_{i_{0} i_{1} \dots i_{q-1} }  \} $. 
By $\bar{\partial} u^{q}_{i_{0} i_{1} \dots i_{q} } = 0$,
$u^{q}_{i_{0} i_{1} \dots i_{q} } $ is a holomorphic $E$-valued $(n , 0)$ form on
$U _{ i_{0} i_{1} \dots i_{q} } \setminus Z$.
Since we obtain
$$
\int_{ U _{ i_{0} i_{1} \dots i_{q} } \setminus Z } | u^{q}_{i_{0} i_{1} \dots i_{q} }  |^2_{\omega , h } dV_{ \omega } 
= \int_{ U _{ i_{0} i_{1} \dots i_{q} } \setminus Z } | u^{q}_{i_{0} i_{1} \dots i_{q} }  |^2_{\omega'  , h } dV_{ \omega'  }  \le q! C'^{2} D < + \infty
$$ 
by Claim \ref{inequality},
$u^{q}_{i_{0} i_{1} \dots i_{q} }  |_{U _{ i_{0} i_{1} \dots i_{q} } \setminus Z} $
extends on $U _{ i_{0} i_{1} \dots i_{q} }$
and
$u^{q}_{i_{0} i_{1} \dots i_{q} }  |_{U _{ i_{0} i_{1} \dots i_{q} } \setminus Z} $
is a holomorphic $E$-valued $(n , 0)$ form on $U _{ i_{0} i_{1} \dots i_{q} }$
by the Riemann extension theorem and Lemma \ref{keylemma}.
Therefore we can define $\beta (u) \coloneqq \{ u^{q}_{i_{0} i_{1} \dots i_{q} }  |_{U _{ i_{0} i_{1} \dots i_{q} } \setminus Z}   \} \in H^q( \mathcal{U} , K_X \otimes E(h) )$.
By the above construction, we obtain the homomorphism
$$
 \beta \colon \frac{L^{2}_{n , q}( Y , E ) _{\omega' , h} \cap {\rm Ker} \bar{\partial} }{ {\rm Im} \bar{\partial} }
\rightarrow H^q( \mathcal{U} , K_X \otimes E(h) ) .
$$

It is easily to check $\alpha$ and $\beta$ induce the isomorphism in Theorem \ref{cohomology}.
\end{proof}

We finish this section by showing Theorem \ref{vanishing} and \ref{injective}.

{ \it Proof of Theorem \ref{vanishing}.}
It is easy by Theorem \ref{estimate} and Theorem \ref{cohomology}.

{ \it Proof of Theorem \ref{injective}.}
By Theorem $\ref{coh}$, $K_X \otimes E(h)$ is a coherent sheaf on $X$.
Therefore by the argument of \cite[Claim 1]{Fuj07}, Theorem \ref{harmonic} and Theorem \ref{cohomology},
we obtain $\overline{ {\rm Im} \bar{\partial} } =  {\rm Im} \bar{\partial} $,
$ \overline{{\rm Im} \bar{\partial}^{*}_{\omega' , h}} = {\rm Im} \bar{\partial}^{*}_{\omega' , h}$ and 
$
H^q( X , K_X \otimes E(h) ) 
\cong
\mathcal{H}^{n,q}( Y , E )
$.
Similarily, we obtain $H^q( X , K_X \otimes L \otimes E(h)  ) \cong
\mathcal{H}^{n,q}( Y , L \otimes E )$.
By \cite[Claim 2]{Fuj07},
the multiplication homomorphism 
$\times s \colon \mathcal{H}^{n,q}( Y , E ) \rightarrow \mathcal{H}^{n,q}( Y , L \otimes E ) $
is well-defined and injective, which completes the proof.

\section{Applications}
\subsection{Hosono's example}
In this subsection, we study a singular Hermitian metric induced by holomorphic sections, proposed by Hosono \cite[Chapter 4]{Hos17}.

Let $s_1, \dots, s_N \in H^0( X , E )$ be holomorphic sections such that $E_{y}$
is generated by $s_1(y), \dots, s_N(y)$ for a general point $y$.
For any point $x \in X$, we take a local coordinate $( U ; z_1 , \dots , z_n)$
near $x$ and take a local holomorphic frame $e_1, \dots, e_r$ of $E$ on $U$.
Write $s_i = \sum_{1 \le j \le r} f_{ij}e_j$, where $f_{ij}$ are holomorphic functions on $U$.
A singular Hermitian metric $h$ induced by $s_1, \dots, s_N$ is given by
$$
h^{-1}_{jk} \coloneqq \sum_{1 \le i \le N} \bar{ f_{ij} }f_{ik} .
$$
By \cite[Example 3.6 and Proposition 4.1]{Hos17}, $h$ is positively curved and $E(h)$ is a coherent sheaf.
Hosono pointed out that
we can easily calculate the curvature of $h$ in the case $N=r$. 

\begin{lem}
\label{Nr}
In the case $N=r$, there exists a proper analytic subset $Z$ such that 
$\sqrt{-1} \Theta_{E , h} = 0$ on $X \setminus Z$. 
In particular we obtain $\sqrt{-1} \Theta_{E , h} \ge 0$ on $X \setminus Z$ in the sense of Nakano.
\end{lem}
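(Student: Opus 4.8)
The plan is to produce, on the complement of a suitable degeneracy locus, an explicit holomorphic frame of $E$ that is orthonormal for $h$; a holomorphic Hermitian bundle admitting such a frame is flat, so its Chern curvature vanishes identically there.

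First I would set $Z := \{ x \in X : (s_1 \wedge \cdots \wedge s_r)(x) = 0 \}$, the zero locus of the holomorphic section $s_1 \wedge \cdots \wedge s_r$ of $\det E$. Since $N = r$ and $E_y$ is generated by $s_1(y), \dots, s_r(y)$ for a general $y$, these values form a basis of $E_y$ there, so $s_1 \wedge \cdots \wedge s_r$ is not identically zero and $Z$ is a proper analytic subset of $X$. On $X \setminus Z$ the vectors $s_1(x), \dots, s_r(x)$ are linearly independent in every fibre, hence $(s_1, \dots, s_r)$ is a holomorphic frame of $E$ over $X \setminus Z$; moreover $h$ is smooth there, since in any local frame $(e_j)$ the condition $\det (f_{ij}) \neq 0$ makes the matrix $( h^{-1}_{jk} ) = ( \sum_i \overline{f_{ij}} f_{ik} )$ invertible with smooth inverse.

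Next I would compute the local expression of $h$ in the frame $(s_i)$. Writing $F := (f_{ij})$ for the transition matrix from $(e_j)$ to $(s_i)$ and $F^{*} := {}^{t}\overline{F}$, the defining formula $h^{-1}_{jk} = \sum_i \overline{f_{ij}} f_{ik}$ says exactly that the matrix of $h$ with respect to $(e_j)$ equals $(F^{*} F)^{-1} = F^{-1} (F^{-1})^{*}$. A direct change-of-frame computation then gives that the matrix of $h$ with respect to $(s_i)$ is $F \bigl( F^{-1} (F^{-1})^{*} \bigr) F^{*} = \textit{Id}_r$, using $(F^{-1})^{*} F^{*} = (F F^{-1})^{*} = \textit{Id}_r$. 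Thus $(s_1, \dots, s_r)$ is a holomorphic orthonormal frame of $(E, h)$ over $X \setminus Z$.

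Finally, since the matrix of $h$ is the constant matrix $\textit{Id}_r$ in the holomorphic frame $(s_i)$, the Chern connection form vanishes in this frame, so $\sqrt{-1}\Theta_{E,h} = 0$ on $X \setminus Z$; this is trivially $\ge 0$ in the sense of Nakano since $\theta_{E,h}(u) = 0$ for every $u \in TX \otimes E$. I do not expect a genuine obstacle: the only points needing care are matching the transpose/conjugate conventions in the change-of-frame step and checking that $Z \neq X$, which is exactly the generic-generation hypothesis; this is the calculation Hosono indicated can be carried out explicitly when $N = r$.
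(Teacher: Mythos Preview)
Your proof is correct and takes a cleaner route than the paper. The paper proves the lemma by a direct pointwise computation: it writes the curvature in the frame $(e_j)$ as $\sqrt{-1}\bigl(\partial\bar\partial\bar h^{-1}-\partial\bar h^{-1}\,\bar h\,\bar\partial\bar h^{-1}\bigr)\bar h$, normalizes so that $f_{ij}(z)=\delta_{ij}$ at a fixed $z\in X\setminus Z$, and checks by hand that the two terms cancel. You instead observe that on $X\setminus Z$ the sections $s_1,\dots,s_r$ themselves form a holomorphic frame and that the Gram matrix of $h$ in this frame is identically $\textit{Id}_r$; flatness is then immediate from $h^{-1}\partial h=0$. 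This is more conceptual---it makes transparent that $h$ is, by construction, precisely the metric for which the $s_i$ are orthonormal---and works uniformly on $X\setminus Z$ without a pointwise normalization. The paper's calculation, by contrast, stays in the original frame $(e_j)$ and therefore more closely mirrors the kind of curvature estimate one would have to attempt when $N>r$, where no holomorphic orthonormal frame is available; this is relevant to the open question raised in the remark following the lemma.
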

\begin{proof}
We take a finite Stein open covering $\{ U_i \}_{i \in \Lambda}$.
Under the condition stated above, the $r \times r$ matrix $A^{(i)}$ on $U_i$ is defined by 
$$
A^{(i)}_{jk} = f_{jk}.
$$
Set  $Z_i \coloneqq \{  z \in U_i \colon \text{rank} \, A^{(i)}(z) < r \}$ and
$W = \{ z \in X \colon h \mbox{ is not smooth at } z \}$.
Write $Z \coloneqq \cup_{i \in \Lambda} Z_i \cup W$, which is a proper analytic subset.

By an easy computation, we have
$$
\sqrt{-1}\Theta_{E, h}
= 
\sqrt{-1} \bar{\partial} ( \bar{h}^{-1} \partial \bar{h})
=
\sqrt{-1}( \partial \bar{\partial} \bar{h}^{-1} -  \partial \bar{h}^{-1} \bar{h} \bar{\partial} \bar{h}^{-1} ) \bar{h}.
$$
For any $z \in X \setminus Z$, we may assume $f_{ij}(z) = \delta_{ij}$.
From $\bar{h}^{-1}_{jk} = \sum_{1 \le i \le r}  f_{ij} \bar{ f_{ik} }  $, we have
$$
\partial \bar{h}^{-1}_{jk}(z) = \partial f_{kj}(z)
\mbox{\,\, and \,\,}
\bar{ \partial } \bar{h}^{-1}_{jk}(z) = \bar{ \partial } \bar{ f_{jk} }(z).
$$
Thus, we obtain
$$
(\partial \bar{h}^{-1} \bar{h} \bar{\partial} \bar{h}^{-1} )_{jk}(z)
=
\sum_{1 \le i \le r} \partial f_{ij}  \bar{ \partial } \bar{  f_{ik} }(z)
=
\partial \bar{\partial} \bar{h}^{-1}_{jk}(z),
$$
which completes the proof.
\end{proof}

By Lemma \ref{Nr} and Theorem \ref{vanishing}, we obtain the following corollary.

\begin{cor}
\label{vanishing2}
Let $(L , h_L)$ be a holomorphic line bundle with a singular Hermitian metric.
We assume there exist a proper analytic subset $Z$ and a positive number $\epsilon > 0$
such that $h_L$ is smooth on $X \setminus Z$ and $\sqrt{-1} \Theta_{L , h_L } \ge \epsilon \omega$ on $X$.

Then, 
$H^q(X , K_X \otimes L \otimes E( hh_L)) = 0$
holds for all $q \ge 1$ for any holomorphic vector bundle $E$ and a singular Hermitian metric $h$ induced by $s_1 \cdots s_r \in H^0(X , E)$.

In particular $H^q(X , K_X \otimes L \otimes E( h)) = 0$ holds for all $q \ge 1$ if $L$ is ample.
\end{cor}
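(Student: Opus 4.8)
The plan is to apply Theorem \ref{vanishing} to the holomorphic vector bundle $\widetilde{E} \coloneqq L \otimes E$ equipped with the singular Hermitian metric $\widetilde{h} \coloneqq h_L \otimes h$, and to note that $K_X \otimes (L\otimes E)(h_L\otimes h)$ is precisely the sheaf written $K_X \otimes L \otimes E(hh_L)$ in the statement. So it suffices to verify the three hypotheses of Theorem \ref{vanishing} for $(\widetilde{E}, \widetilde{h})$, with a suitable analytic subset and a suitable $\epsilon$.

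First I would fix the analytic subset. Let $Z_0$ be the proper analytic subset produced by Lemma \ref{Nr}, on whose complement $h$ is smooth and $\sqrt{-1}\Theta_{E,h}=0$, and let $Z$ be the proper analytic subset of the hypothesis, off which $h_L$ is smooth. Put $Z' \coloneqq Z \cup Z_0$, again a proper analytic subset; then $\widetilde{h}$ is smooth on $X \setminus Z'$, which is hypothesis (1). For hypothesis (3), all three metrics are smooth on $X\setminus Z'$, so the Chern curvatures add:
$$
\sqrt{-1}\Theta_{\widetilde{E},\widetilde{h}} = \sqrt{-1}\Theta_{L,h_L}\otimes \textit{Id}_E + \textit{Id}_L\otimes\sqrt{-1}\Theta_{E,h} \qquad\text{on } X\setminus Z'.
$$
By Lemma \ref{Nr} the second term vanishes there, and $\sqrt{-1}\Theta_{L,h_L}\ge\epsilon\omega$ on $X$ by hypothesis, so $\sqrt{-1}\Theta_{\widetilde{E},\widetilde{h}}\ge\epsilon\omega\otimes\textit{Id}_{\widetilde{E}}$ on $X\setminus Z'$ in the sense of Nakano, which is hypothesis (3).

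For hypothesis (2) I would take $\zeta\equiv 0$ and check directly that $\widetilde{h}$ is positively curved. Recall $h$ is positively curved by \cite[Example 3.6 and Proposition 4.1]{Hos17}, and a local weight $\varphi_L$ of $h_L$ is plurisubharmonic since $\deldel \varphi_L = \sqrt{-1}\Theta_{L,h_L}\ge\epsilon\omega>0$. Because $L^{*}$ is a line bundle, over a trivializing open set every local holomorphic section of $L^{*}\otimes E^{*}$ is of the form $e_L^{*}\otimes w$ with $e_L^{*}$ a local holomorphic frame of $L^{*}$ and $w$ a local holomorphic section of $E^{*}$, and
$$
\log|e_L^{*}\otimes w|^{2}_{\widetilde{h}^{*}} = \varphi_L + \log|w|^{2}_{h^{*}}
$$
is plurisubharmonic, being a sum of plurisubharmonic functions (the second by positive curvature of $h$). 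Hence $\widetilde{h}^{*}$ is negatively curved, i.e.\ $\widetilde{h}$ is positively curved, and Theorem \ref{vanishing} yields $H^q(X, K_X\otimes (L\otimes E)(h_L\otimes h))=0$ for all $q\ge 1$, which is the first assertion. For the last assertion, if $L$ is ample then, $X$ being compact, $L$ carries a smooth Hermitian metric $h_L$ with $\sqrt{-1}\Theta_{L,h_L}\ge\epsilon\omega$ for a sufficiently small $\epsilon>0$, so the first assertion applies with $Z=\emptyset$; since $h_L$ is smooth, $e^{-\varphi_L}$ is locally bounded between positive constants, so $(L\otimes E)(h_L\otimes h)=L\otimes E(h)$, giving $H^q(X, K_X\otimes L\otimes E(h))=0$ for all $q\ge 1$.

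The verifications of hypotheses (1) and (3) are routine bookkeeping — combining the analytic subset of Lemma \ref{Nr} with that of the hypothesis on $h_L$, and using additivity of Chern curvature where the metrics are smooth. The one step needing genuine care is hypothesis (2): positivity of a singular Hermitian metric is not a pointwise curvature condition, so one cannot argue by "adding curvatures" across $Z'$; one must instead work with the dual metric as above, and it is precisely the fact that $L$ is a line bundle that reduces the tensor product to adding the plurisubharmonic weight $\varphi_L$.
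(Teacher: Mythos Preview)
Your proof is correct and follows exactly the approach indicated in the paper, which simply asserts that the corollary follows from Lemma \ref{Nr} and Theorem \ref{vanishing}; you have carefully filled in the verification of all three hypotheses of Theorem \ref{vanishing} for $(L\otimes E, h_L\otimes h)$, including the one genuinely delicate point (hypothesis (2)) that tensoring with the line bundle metric $h_L$ preserves positive curvature in the sense of singular Hermitian metrics.
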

We point out that such a metric $h_L$ on $L $ as in Lemma \ref{vanishing2} always exists if $L$ is big.

Now, we introduce Hosono's example \cite[Example 4.4]{Hos17}.
Set $X=\mathbb{C}^2$ and let $E = X \times \mathbb{C}^2$ be a trivial rank two bundle.
We choose sections $s_1 = e_1$ and $s_2 = z e_1 + w e_2$.
Then the singular Hermitian metric $h_{E}$ induced by $s_1, s_2$ can be written by
$$
h_E = \frac{1}{|w|^2}
\begin{pmatrix}
|w|^2 & -w \bar{z} \\
-z \bar{w} & |z|^2 + 1
\end{pmatrix}
.
$$
 Hosono proved the following theorem
 by calculating the standard approximation by convolution of $h_E$.
\begin{thm}\cite[Theorem 1.2]{Hos17}
The standard approximation defined by convolution of $h_E$ 
does not have uniformly bounded curvature from below in the sense of Nakano.
\end{thm}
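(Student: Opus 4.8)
The plan is to make the convolution approximation completely explicit, and then to exhibit, for each small $\epsilon>0$, a point $p_\epsilon$ tending to $Z=\{w=0\}$ together with a non‑decomposable vector $u_\epsilon\in T_{p_\epsilon}X\otimes E_{p_\epsilon}$ along which $\theta_{E,h_\epsilon}(u_\epsilon)<0$ with $\theta_{E,h_\epsilon}(u_\epsilon)/\theta_{\omega\otimes\textit{Id}_E}(u_\epsilon)\to-\infty$. By definition, ``uniformly bounded curvature from below in the sense of Nakano'' means the existence of a continuous $\eta$ with $\sqrt{-1}\Theta_{E,h_\epsilon}\ge\eta\,\omega\otimes\textit{Id}_E$ in the Nakano sense for all $\epsilon$; since the points $p_\epsilon$ below stay in a fixed compact set, on which every continuous $\eta$ is $\ge -C$ for some $C$, it suffices to rule out $\sqrt{-1}\Theta_{E,h_\epsilon}\ge -C\,\omega\otimes\textit{Id}_E$, i.e. to produce such $p_\epsilon$ and $u_\epsilon$.

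First I would pin down the approximation. Because the entry $(h_E)_{2\bar2}=(|z|^2+1)/|w|^2$ is not locally integrable near $Z$, one cannot convolve $h_E$ itself; the standard procedure (see \cite{Hos17}) is to convolve the dual metric $h_E^{*}={}^t h_E^{-1}$, whose Gram matrix $\left(\begin{smallmatrix}1+|z|^2 & z\bar w\\ w\bar z& |w|^2\end{smallmatrix}\right)$ is smooth, and to set $h_\epsilon:={}^t\bigl(h_E^{*}*\rho_\epsilon\bigr)^{-1}$ for a standard mollifier $\rho_\epsilon$ (which we may take invariant under the natural circle symmetries). A direct computation then gives
\[
h_\epsilon^{-1}=\begin{pmatrix}1+|z|^2+c_1\epsilon^2 & w\bar z\\ z\bar w & |w|^2+c_2\epsilon^2\end{pmatrix},\qquad c_1,c_2>0,
\]
so $h_\epsilon$ is an explicit smooth positive definite Hermitian metric, $h_\epsilon\to h_E$ smoothly on $X\setminus Z$, while along $Z$ the ``fiber direction'' entry $(h_\epsilon)_{2\bar2}$ is of size $\epsilon^{-2}$.

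Assuming for contradiction that $\sqrt{-1}\Theta_{E,h_\epsilon}\ge -C\,\omega\otimes\textit{Id}_E$ in the Nakano sense for all $0<\epsilon<\epsilon_0$, I would compute $\sqrt{-1}\Theta_{E,h_\epsilon}$ at $p_\epsilon:=(0,\epsilon)$ from the standard formula $c_{jk\lambda\mu}=-\partial_j\partial_{\bar k}(h_\epsilon)_{\lambda\mu}+\sum_{\sigma,\rho}(h_\epsilon^{-1})^{\sigma\rho}\,\partial_j(h_\epsilon)_{\lambda\sigma}\,\partial_{\bar k}(h_\epsilon)_{\rho\mu}$. The rescaling $z=\epsilon\tilde z$, $w=\epsilon\tilde w$ turns $h_\epsilon$ into a fixed $\epsilon$‑independent model matrix times powers of $\epsilon$, so all entries of $h_\epsilon$, of $h_\epsilon^{-1}$, and of their first and second derivatives at $p_\epsilon$ have explicit leading order. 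Restricting the Nakano form to the $2$‑dimensional subspace spanned by $dz\otimes e_2$ and $dw\otimes e_1$ one obtains a $2\times2$ Hermitian form whose coefficients satisfy $c_{zz22}(p_\epsilon)\asymp\epsilon^{-2}$, $c_{ww11}(p_\epsilon)=O(1)$, and — crucially — $c_{wz12}(p_\epsilon)\asymp\epsilon^{-2}$ with nonzero leading coefficient. Since $|c_{wz12}(p_\epsilon)|\asymp\epsilon^{-2}$ dominates $\sqrt{c_{zz22}(p_\epsilon)\,c_{ww11}(p_\epsilon)}\asymp\epsilon^{-1}$, this $2\times2$ form is indefinite with a negative eigenvalue $\lambda_-\asymp-\epsilon^{-2}$. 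Taking $u_\epsilon=a\,dz\otimes e_2+b\,dw\otimes e_1$ with $(a,b)$ a $\lambda_-$‑eigenvector, one gets $\theta_{E,h_\epsilon}(u_\epsilon)(p_\epsilon)=\lambda_-\bigl(|a|^2+|b|^2\bigr)$ while $\theta_{\omega\otimes\textit{Id}_E}(u_\epsilon)(p_\epsilon)=|a|^2+|b|^2$, hence $\theta_{E,h_\epsilon}(u_\epsilon)/\theta_{\omega\otimes\textit{Id}_E}(u_\epsilon)=\lambda_-\to-\infty$, the desired contradiction. (That the bad direction cannot be decomposable is expected: $h_\epsilon$ remains Griffiths‑semipositive near $Z$, so the failure is purely a Nakano phenomenon.)

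The main obstacle is the computation establishing $c_{wz12}(p_\epsilon)\asymp\epsilon^{-2}$, i.e. that this mixed coefficient does not vanish. It is a difference of two contributions of size $\epsilon^{-2}$, one from $-\partial_z\partial_{\bar w}(h_\epsilon)_{12}$‑type terms and one from the quadratic first‑order term, and the point is that for $h_E$ itself these cancel exactly — this is precisely the content of Lemma \ref{Nr}, the flatness of $h_E$ on $X\setminus Z$ — whereas the $\epsilon^2$‑regularization of the two diagonal entries of $h_E^{-1}$ breaks that balance and leaves a nonzero residue of the maximal possible order $\epsilon^{-2}$ (in my computation the residue is $\tfrac{c_2}{(1+c_2)^2}\epsilon^{-2}$, vanishing only in the limit $c_2\to0$ that recovers $h_E$). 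A secondary, more cosmetic point is to confirm that ``the standard approximation by convolution'' of the positively curved $h_E$ is the metric $h_\epsilon$ above — equivalently, the one obtained by mollifying the dual (inverse) matrix — and that any reasonable normalization exhibits the same $\epsilon^{-2}$ blow‑up.
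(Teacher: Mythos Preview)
This theorem is not proved in the present paper: it is quoted from \cite{Hos17}, and the paper only says Hosono ``proved the following theorem by calculating the standard approximation by convolution of $h_E$''. So there is no argument here to compare against beyond that one‑line description. Your overall strategy --- mollify the smooth dual matrix to get an explicit $h_\epsilon$, then exhibit test tensors $u_\epsilon$ at points $p_\epsilon\to Z$ on which the Nakano form is unboundedly negative --- is exactly the approach Hosono takes, and your formula for $h_\epsilon^{-1}$ is correct.

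That said, your quantitative argument contains a real error. You compute the coefficients $c_{jk\lambda\mu}$ via the lowered‑index formula $-\partial_j\partial_{\bar k}h_{\lambda\mu}+h^{\sigma\rho}\partial_j h_{\lambda\sigma}\partial_{\bar k}h_{\rho\mu}$ in the \emph{standard} frame $e_1,e_2$; the Nakano form is then indeed $\sum c_{jk\lambda\mu}u_{j\lambda}\bar u_{k\mu}$ with $u_{j\lambda}$ the coefficients in that same frame. But the comparison term $\theta_{\omega\otimes\textit{Id}_E}(u)$ is not $|a|^2+|b|^2$: it is the $h_\epsilon$‑norm $\sum_j (h_\epsilon)_{\lambda\bar\mu}\,u_{j\lambda}\bar u_{j\mu}$, and at $p_\epsilon=(0,\epsilon)$ one has $(h_\epsilon)_{2\bar2}\asymp\epsilon^{-2}$, so for $u_\epsilon=a\,\partial_z\otimes e_2+b\,\partial_w\otimes e_1$ the correct denominator is $\asymp\epsilon^{-2}|a|^2+|b|^2$. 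With your claimed orders and an eigenvector satisfying $|a|\asymp|b|$, the ratio $\theta_{E,h_\epsilon}(u_\epsilon)/\theta_{\omega\otimes\textit{Id}_E}(u_\epsilon)$ is then only $O(1)$, and the argument as written does not give a contradiction. The repair is to pass to an $h_\epsilon$‑orthonormal frame at $p_\epsilon$ (rescale $e_2$ by $\asymp\epsilon$): in that frame the two diagonal entries on your $2$‑plane are $O(1)$ while the off‑diagonal one is $\asymp\epsilon^{-1}$, giving a negative eigenvalue $\asymp-\epsilon^{-1}$ against a unit denominator, and the conclusion follows. Finally, note that what you call ``the main obstacle'' --- the actual verification that the mixed coefficient does not cancel and has the stated order --- is the entire content of the theorem; you have given a heuristic and a claimed value $\tfrac{c_2}{(1+c_2)^2}\epsilon^{-2}$ but not the computation, so the proposal is still an outline rather than a proof.
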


Therefore, we can not apply the vanishing theorem of \cite{Cat98} to this example.
However we can apply Corollary \ref{vanishing2}  to this example.
Thus our results are new results.

\begin{rem}
We ask whether there exists a proper analytic subset $Z$ such that 
$\sqrt{-1} \Theta_{E , h} \ge 0$ on $X \setminus Z$ in the sense of Nakano
for any singular Hermitian metric $h$ induced by $s_1 \cdots s_N \in H^0(X , E)$
in the case $N > r$. 
This calculation is very complicated and this question is open, but it is likely that the answer is "No".
\end{rem}


\subsection{Big vector bundles}
We review some of the standard facts on big 
vector bundles.
\begin{dfn} \cite[Section 2]{BKKMSU}
Let $X$ be a smooth projective variety and $E$ be a holomorphic vector bundle.
The {\it base locus} of $E$ is defined by
$$
{\rm Bs}(E) \coloneqq \{ x \in X \colon H^0(X , E ) \rightarrow E_x \mbox{ is not surjective} \},
$$
and the {\it stable base locus} of $E$ is defined by
$$
\mathbb{B}(E) \coloneqq \bigcap_{m > 0} {\rm Bs}({\rm Sym}^{m}E),
 $$ 
where $Sym^{m}(E)$ is the $m$-th symmetric power of $E$.

Let $A$ be an ample line bundle.
We define
the {\it argumented base locus} of $E$ by
$$
\mathbb{B}_{+}(E) = \bigcap_{ p / q \in \mathbb{Q}_{>0}} \mathbb{B}( {\rm Sym}^{q} E \otimes A^{p}).
$$
\end{dfn}
We point out $\mathbb{B}_{+}(E)$ do not depend on the choice of the ample line bundle $A$ by \cite[Remark 2.7]{BKKMSU}.

\begin{dfn} \cite[Theorem 1.1 and Section 6]{BKKMSU}
\begin{enumerate}
\item A vector bundle $E$ is said to be {\it L-big}
if the tautological bundle
$\mathcal{O}_{ \mathbb{P}(E)}  (1)$ on $\mathbb{P}(E)$ is big.

\item A vector bundle $E$ is said to be {\it V-big}
if $\mathbb{B}_{+}(E) \neq X$.
\end{enumerate}
\end{dfn}
We note that if $E$ is V-big then it is L-big as well by \cite[Corollary 6.5]{BKKMSU}.
We will denote by $\pi \colon \mathbb{P}(E) \rightarrow X $ the canonical projection and
by $\tilde{\omega} $ a \kah form on $\mathbb{P}(E)$.
Inayama communicated to the author 
the following lemma.
\begin{lem}
\label{inayama}
Let $E$ be an L-big vector bundle and $\tilde{h}$ be a singular Hermitian metric on 
$\mathcal{O}_{ \mathbb{P}(E)}(1)$.
We assume that there exist a positive number $\epsilon > 0$ and a proper analytic subset $\tilde{Z} \subset \mathbb{P}(E)$
such that $\tilde{h}$ is smooth on $\mathbb{P}(E) \setminus \tilde{Z}  $,
$\pi ( \tilde{Z} ) \neq X$, and
$\sqrt{-1} \Theta_{ \mathcal{O}_{ \mathbb{P}(E) }(1) , \tilde{h} } \ge \epsilon
\tilde{\omega} \otimes id_{ \mathcal{O}_{ \mathbb{P}(E)}(1) }$.

Then $\tilde{h}$ induces a singular Hermitian metric $h_m$ on
${\rm Sym}^{m}(E) \otimes \det E $
such that
\begin{enumerate}
\item $ h_m $ is smooth on $X \setminus \pi ( \tilde{Z} )$,
\item $h_m$ is a positively curved singular Hermitian metric, and
\item $\sqrt{-1}\Theta_{{\rm Sym}^{m}(E) \otimes \det E, h_m } \ge \epsilon \omega \otimes Id_{{\rm Sym}^{m}(E) \otimes \det E}  $ on $X \setminus \pi ( \tilde{Z} )$ in the sense of Nakano.
\end{enumerate}

\end{lem}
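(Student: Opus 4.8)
The plan is to push the positivity of $\tilde h$ on the tautological bundle down to the symmetric power $\mathrm{Sym}^m(E)\otimes\det E$ via the classical identification of these bundles with direct images along $\pi\colon\mathbb P(E)\to X$, and then to read off the three asserted properties from that identification. Concretely, recall the isomorphism $\pi_*\bigl(\mathcal O_{\mathbb P(E)}(m)\bigr)\cong\mathrm{Sym}^m(E)$ (with the Grothendieck convention on $\mathbb P(E)$), and more precisely the fiberwise statement that the fiber of $\mathrm{Sym}^m(E)\otimes\det E$ over $x$ is $H^0\bigl(\mathbb P(E_x),\mathcal O(m)\bigr)\otimes\det E_x$; the twist by $\det E$ is exactly the relative-canonical correction $K_{\mathbb P(E)/X}\cong\mathcal O_{\mathbb P(E)}(-r)\otimes\pi^*\det E$ that makes fiberwise integration of $(n,0)$-forms with values in $\mathcal O(m)$ land in $\mathrm{Sym}^m(E)\otimes\det E$. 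Using $\tilde h$ one defines $h_m$ at a point $x\notin\pi(\tilde Z)$ as the $L^2$ metric: for $u$ in the fiber, view $u$ as a section of $\mathcal O(m)$ over the projective space fiber (tensored with the canonical factor) and set $|u|^2_{h_m}=\int_{\mathbb P(E_x)}|u|^2_{\tilde h}$, integrating against the natural volume form coming from the $(r-1,r-1)$-part of the metric.

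The key steps, in order, are: (i) verify $h_m$ is well-defined and smooth on $X\setminus\pi(\tilde Z)$ — here one uses that $\tilde h$ is smooth on $\mathbb P(E)\setminus\tilde Z$ and that $\pi$ is a proper submersion, so the fiber integrals depend smoothly on $x$; for the base locus one checks $\det h_m$ stays finite and positive almost everywhere because $\pi(\tilde Z)\neq X$ and $E$ is L-big. (ii) Prove $h_m$ is positively curved. This I would do by the Berndtsson–P\u aun positivity of direct images: since $\sqrt{-1}\Theta_{\mathcal O(1),\tilde h}\geq\epsilon\,\tilde\omega\otimes\mathrm{id}>0$, in particular $\tilde h$ is a (semi)positively curved singular metric on $\mathcal O_{\mathbb P(E)}(m)\otimes K_{\mathbb P(E)/X}$-twisted bundle, and the $L^2$/direct-image metric on $\pi_*$ of a relatively positive line bundle is positively curved; equivalently one invokes the result (as in \cite{BP}, \cite{PT14}, \cite{HPS17}) that the metric dual to the one defined by $\log|v|^2_{h_m^*}$ being psh follows from Berndtsson's theorem on the plurisubharmonic variation of Bergman kernels. (iii) Prove the Nakano lower bound $\sqrt{-1}\Theta_{\mathrm{Sym}^m(E)\otimes\det E,h_m}\geq\epsilon\,\omega\otimes\mathrm{Id}$ on $X\setminus\pi(\tilde Z)$. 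On the locus where everything is smooth this is the precise Nakano-positivity statement in Berndtsson's curvature formula: if $\sqrt{-1}\Theta_{\mathcal O(1),\tilde h}-\epsilon\,\tilde\omega\otimes\mathrm{id}\geq 0$ and the relative bundle is relatively positive, then the $L^2$ metric on the direct image has curvature bounded below in the Nakano sense by the fiber integral of $\sqrt{-1}\Theta_{\mathcal O(m),\tilde h}+\mathrm{Ric}$-type terms; choosing $\tilde\omega$ compatibly with $\omega$ (i.e. $\tilde\omega\geq\pi^*\omega$ up to scaling, absorbing the constant into $\epsilon$) one gets exactly the stated inequality with the same $\epsilon$.

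I would organize the write-up so that (ii) and (iii) are handled simultaneously by quoting Berndtsson's theorem on Nakano positivity of direct image bundles in the twisted form: for $\pi\colon\mathcal X\to\mathcal Y$ a proper submersion and $(L,\phi)$ a line bundle with $\sqrt{-1}\partial\bar\partial\phi>0$ on $\mathcal X$ and positive along fibers, the $L^2$ metric on $\pi_*(K_{\mathcal X/\mathcal Y}\otimes L)$ is Nakano (semi)positive, with an explicit lower bound governed by the transversal part of $\sqrt{-1}\partial\bar\partial\phi$. Applying this over $X\setminus\pi(\tilde Z)$ with $L=\mathcal O_{\mathbb P(E)}(m+r)$ twisted appropriately and $\phi$ the local weight of $\tilde h^{\otimes(m+r)}$, and using $\sqrt{-1}\Theta_{\mathcal O(1),\tilde h}\geq\epsilon\tilde\omega$, yields all of (2) and (3); property (1) is the elementary smoothness/regularity statement. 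The main obstacle will be step (iii): making the Nakano lower bound come out with precisely the constant $\epsilon$ (rather than some smaller constant depending on the geometry of the fibers) requires a careful choice of reference \kah form $\tilde\omega$ on $\mathbb P(E)$ and a precise bookkeeping of the curvature contribution of the relative canonical twist in Berndtsson's formula — one wants $\tilde\omega$ and $\omega$ arranged so that the fiberwise average of the transversal Hessian of the weight dominates $\epsilon\,\pi^*\omega$ exactly. The plurisubharmonic-variation (positive-curvature) part, by contrast, is robust and follows directly from the cited results on singular Hermitian metrics on direct images.
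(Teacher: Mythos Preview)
Your proposal is correct and follows essentially the same approach as the paper's proof: identify ${\rm Sym}^{m}(E)\otimes\det E$ with the direct image $\pi_{*}(K_{\mathbb{P}(E)/X}\otimes\mathcal{O}_{\mathbb{P}(E)}(m+r))$, endow it with the $L^2$ metric $h_m$ induced by $\tilde h$, then cite Berndtsson's Nakano-positivity theorem \cite{Ber07} for properties (1) and (3) and the singular direct-image positivity results of \cite{PT14}, \cite{HPS17} for property (2). The paper's own proof is in fact even terser than your outline and does not address the bookkeeping concern you raise about recovering the exact constant $\epsilon$; it simply asserts that ``the argument of \cite[Theorem 1.2, Theorem 1.3, and Section 4]{Ber07}'' yields (1) and (3), so your caution about this point is well placed but does not reflect a divergence in strategy.
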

\begin{proof}
From ${\rm Sym}^{m}(E) \otimes \det E 
= \pi_{*}( K_{ \mathbb{P}(E) / X} \otimes \mathcal{O}_{ \mathbb{P}(E)}(m + r)) $,
${\rm Sym}^{m}(E) \otimes \det E $ can be endowed with the $L^2$ metric $h_m$
with respect to $\tilde{h}$.
Therefore by the argument of \cite[Theorem 1.2, Theorem 1.3, and Section 4]{Ber07},
(1) and (3) are proved.
By \cite{HPS17} and \cite{PT14}, (2) is proved.
\end{proof}

\begin{rem}
By \cite[Proposition 3.2]{BKKMSU},
$\pi \Bigl( \mathbb{B}_{+}( \mathcal{O}_{ \mathbb{P}(E)} (1) ) \Bigr)= \mathbb{B}_{+}(E)$
holds.
Therefore if $E$ is V-big, such a metric $\tilde{h}$ on $\mathcal{O}_{ \mathbb{P}(E)}(1)$ as the assumption of Lemma \ref{inayama} always exists.
\end{rem}

Thus, we can apply Theorem \ref{vanishing} to $( {\rm Sym}^{m}(E) \otimes \det E , h_m)$
and we have the following corollary.
\begin{cor}
Under the conditions stated in Lemma \ref{inayama},
$H^q(X , K_X \otimes ({\rm Sym}^{m}(E) \otimes \det E) (h_m)) = 0$ holds
for any $m \ge 0$ and $q \ge 1$.
\end{cor}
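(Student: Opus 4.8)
The plan is to obtain the corollary as a direct application of Lemma~\ref{inayama} together with Theorem~\ref{vanishing}. Since $E$ is an L-big vector bundle on a smooth projective variety $X$, the base $X$ is in particular a compact \kah manifold, so the standing hypothesis of Theorem~\ref{vanishing} is satisfied. What remains is to check that the pair $\bigl({\rm Sym}^{m}(E)\otimes\det E,\ h_m\bigr)$, together with the analytic set $\pi(\tilde Z)$, fulfils conditions (1)--(3) of that theorem.

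First I would note that $Z\coloneqq\pi(\tilde Z)$ is a proper analytic subset of $X$: the map $\pi\colon\mathbb{P}(E)\to X$ is proper, hence by Remmert's proper mapping theorem $\pi(\tilde Z)$ is an analytic subset of $X$, and it is proper because of the assumption $\pi(\tilde Z)\neq X$ in Lemma~\ref{inayama}. With this choice, condition (1) of Theorem~\ref{vanishing} is precisely conclusion (1) of Lemma~\ref{inayama}. For condition (2) I would take the continuous function $\zeta\equiv 0$, so that $h_m e^{-\zeta}=h_m$, which is positively curved by conclusion (2) of Lemma~\ref{inayama}. Finally, condition (3) holds with the same $\epsilon>0$ that appears in the hypothesis on $\tilde h$: conclusion (3) of Lemma~\ref{inayama} says exactly that
\[
\sqrt{-1}\,\Theta_{{\rm Sym}^{m}(E)\otimes\det E,\,h_m}-\epsilon\,\omega\otimes {\rm Id}_{{\rm Sym}^{m}(E)\otimes\det E}\ \ge\ 0
\]
on $X\setminus\pi(\tilde Z)$ in the sense of Nakano.

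Having verified the three hypotheses, I would simply invoke Theorem~\ref{vanishing} with $(E,h)$ replaced by $\bigl({\rm Sym}^{m}(E)\otimes\det E,\ h_m\bigr)$ to conclude $H^q\bigl(X,\ K_X\otimes({\rm Sym}^{m}(E)\otimes\det E)(h_m)\bigr)=0$ for every $q\ge 1$; the case $m=0$ is included, with ${\rm Sym}^{0}(E)\otimes\det E=\det E$ and $h_0$ the induced metric. There is no genuine obstacle here — the statement is a formal consequence of the earlier results, the only point worth spelling out being the analyticity and properness of $\pi(\tilde Z)$ and the trivial bookkeeping that the positivity constant transfers unchanged. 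All the real work (coherence of the multiplier sheaf in Theorem~\ref{coh}, the \v{C}ech--Dolbeault isomorphism in Theorem~\ref{cohomology}, the $L^2$ vanishing in Theorem~\ref{vanishing}, and the construction of $h_m$ with its curvature estimate in Lemma~\ref{inayama}) has already been done.
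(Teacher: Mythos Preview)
Your proposal is correct and matches the paper's approach exactly: the paper simply states that one can apply Theorem~\ref{vanishing} to $({\rm Sym}^{m}(E)\otimes\det E,\ h_m)$ and records the corollary without further argument. You have spelled out the verification of hypotheses (1)--(3) and the properness/analyticity of $\pi(\tilde Z)$ via Remmert, which the paper leaves implicit, but the underlying route is identical.
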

This corollary is a generalization of Griffiths' vanishing theorem in \cite{Gri}.

\section{On Ohsawa's vanishing theorem}
We use the results of \cite{Oh88}.
Let $Y$ be a complete \kah manifold, $\omega'$ be a \kah form
and $(E , h)$ be a vector bundle with a smooth Hermitian metric.
Let $\tau$ be a smooth semipositive $(1,1)$ form on $Y$.
Write
$$
L^{2}_{ n , q }( Y , E )_{ \tau , h} \coloneqq 
 \{ f \in L^{2}_{ n , q }( Y , E )_{ \omega' +\tau , h} \,;\,\,
\lim_{\epsilon \downarrow 0}\int_{Y} | f |^2_{\ \epsilon \omega' + \tau, h } dV_{ \epsilon \omega' + \tau} < + \infty \}.
$$
By \cite[Proposition 2.4]{Oh88}, $\lim_{\epsilon \downarrow 0}\int_{Y} | f |^2_{\ \epsilon \omega' + \tau, h } dV_{ \epsilon \omega' + \tau}$ and $L^{2}_{ n , q }( Y , E )_{ \tau , h}$ do not depend on the choice of the metric $\omega'$.
We use Ohsawa's $L^2$ estimate.
\begin{thm}\cite[Theorem 2.8]{Oh88}
\label{ohsawal2}
Under the conditions stated above,
we also assume that $\sqrt{-1} \Theta_{E , h} -  \tau \otimes \textit{Id}_{E}  \ge 0$ on $Y$.
For any $q \ge 1$ and $f \in L^{2}_{ n , q }( Y , E )_{ \tau , h} $ such that $\bar{\partial} f = 0$,
there exists $g \in L^{2}_{ n , q-1 }( Y , E )_{ \tau , h}  $ such that 
$\bar{\partial} g = f$.

\end{thm}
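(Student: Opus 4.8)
The plan is to prove the statement by regularizing the degenerate form $\tau$ and passing to a weak limit, since the hypothesis $\sqrt{-1}\Theta_{E,h}-\tau\otimes\textit{Id}_{E}\ge0$ is only a semipositivity and does \emph{not} provide the strict Nakano positivity required by Theorem \ref{estimate}. For each $\epsilon>0$ I set $\omega_\epsilon\coloneqq\epsilon\omega'+\tau$, which is a genuine \kah form. I would first fix a complete \kah metric on $Y$ (available because $Y$ is a complete \kah manifold) and run the argument with $\omega_\epsilon$ augmented by a small multiple of it, so that the $L^2$ Hodge theory of Theorem \ref{harmonic} applies; that auxiliary term is removed at the very end, which is harmless because every curvature comparison below uses only $\sqrt{-1}\Theta_{E,h}\ge\tau\otimes\textit{Id}_{E}$ and is insensitive to enlarging the reference metric.

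For fixed $\epsilon$ the key input is the Bochner--Kodaira--Nakano inequality: for a compactly supported smooth $E$-valued $(n,q)$ form $v$ one has $\|\bar\partial v\|^2_{\omega_\epsilon,h}+\|\bar\partial^{*}_{\omega_\epsilon,h}v\|^2\ge\int_Y\langle[\sqrt{-1}\Theta_{E,h},\Lambda_{\omega_\epsilon}]v,v\rangle\,dV_{\omega_\epsilon}$, and by the curvature hypothesis the integrand dominates $\langle[\tau,\Lambda_{\omega_\epsilon}]v,v\rangle\,dV_{\omega_\epsilon}\ge0$. Applying H\"ormander's duality (Hahn--Banach/Riesz) to the $\bar\partial$-closed datum $f$, this a priori estimate produces $g_\epsilon\in L^{2}_{n,q-1}(Y,E)_{\omega_\epsilon,h}$ with $\bar\partial g_\epsilon=f$ and $\int_Y|g_\epsilon|^2_{\omega_\epsilon,h}\,dV_{\omega_\epsilon}\le\int_Y\langle[\tau,\Lambda_{\omega_\epsilon}]^{-1}f,f\rangle_{\omega_\epsilon,h}\,dV_{\omega_\epsilon}$, where the upper bound uses operator monotonicity of the inverse together with $\sqrt{-1}\Theta_{E,h}\ge\tau\otimes\textit{Id}_{E}$. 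The delicate point is to bound the right-hand side \emph{uniformly} in $\epsilon$: diagonalizing $\tau$ against $\omega_\epsilon$ and using that $[\tau,\Lambda_{\omega_\epsilon}]$ acts on $(n,q)$ forms with eigenvalues $\sum_{j\in J}\tau_j$ (the $\tau_j\in[0,1)$ being the eigenvalues of $\tau$ relative to $\omega_\epsilon$), I would compare this twisted quantity with $|f|^2_{\omega_\epsilon,h}\,dV_{\omega_\epsilon}$ and invoke \cite[Proposition 2.4]{Oh88} together with the defining finiteness $\lim_{\epsilon\downarrow0}\int_Y|f|^2_{\epsilon\omega'+\tau,h}\,dV_{\epsilon\omega'+\tau}<\infty$ of the space $L^{2}_{n,q}(Y,E)_{\tau,h}$ to obtain a bound independent of $\epsilon$.

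Finally I would let $\epsilon\downarrow0$. The uniform bound on $\int_Y|g_\epsilon|^2_{\omega_\epsilon,h}\,dV_{\omega_\epsilon}$ yields, after passing to a weakly convergent subsequence, a limit $g$ with $\bar\partial g=f$ in the sense of distributions; checking that $g$ lies in $L^{2}_{n,q-1}(Y,E)_{\tau,h}$ reduces to verifying the $\lim_{\epsilon\downarrow0}$ integrability condition, which follows from lower semicontinuity of the norms under weak limits and the monotonicity furnished by \cite[Proposition 2.4]{Oh88}. The main obstacle is exactly the degeneracy of $\tau$: because $\tau$ is only semipositive, the Nakano term $[\tau,\Lambda_{\omega_\epsilon}]$ has no uniform positive lower bound and Theorem \ref{estimate} cannot be applied for any single metric, so essentially all the work lies in controlling the twisted norm $\langle[\tau,\Lambda_{\omega_\epsilon}]^{-1}f,f\rangle$ uniformly as $\epsilon\downarrow0$ --- this is precisely where the unusual definition of the norm on $L^{2}_{n,q}(Y,E)_{\tau,h}$, with its limit over $\epsilon$, becomes indispensable.
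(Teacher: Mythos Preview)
The paper does not prove this statement: Theorem~\ref{ohsawal2} is quoted verbatim from \cite[Theorem~2.8]{Oh88} and is used as a black box in the proof of Theorem~\ref{ohsawa}. There is therefore no ``paper's own proof'' to compare your proposal against.

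That said, your sketch is a reasonable outline of Ohsawa's original argument in \cite{Oh88}: the regularization $\omega_\epsilon=\epsilon\omega'+\tau$, the Bochner--Kodaira--Nakano inequality yielding solutions $g_\epsilon$ with bounds governed by $[\tau,\Lambda_{\omega_\epsilon}]^{-1}$, and a weak limit as $\epsilon\downarrow0$ are indeed the main steps. You also correctly isolate the crux --- obtaining an $\epsilon$-uniform bound on $\int_Y\langle[\tau,\Lambda_{\omega_\epsilon}]^{-1}f,f\rangle_{\omega_\epsilon,h}\,dV_{\omega_\epsilon}$ --- and correctly identify that the $\lim_{\epsilon\downarrow0}$ built into the definition of $L^2_{n,q}(Y,E)_{\tau,h}$ is exactly what makes this possible. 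One point that remains genuinely delicate and is only gestured at in your sketch: the eigenvalue comparison between $\langle[\tau,\Lambda_{\omega_\epsilon}]^{-1}f,f\rangle_{\omega_\epsilon,h}\,dV_{\omega_\epsilon}$ and $|f|^2_{\omega_\epsilon,h}\,dV_{\omega_\epsilon}$ does not reduce to a pointwise inequality with a universal constant (since $\sum_{j\in J}\tau_j$ can be arbitrarily small where $\tau$ degenerates), so the uniform bound really requires the monotonicity in $\epsilon$ established in \cite[Proposition~2.4]{Oh88} together with the \emph{finiteness of the limit} in the hypothesis on $f$, not merely finiteness for each fixed $\epsilon$. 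Your last paragraph acknowledges this, but in a full proof this step needs to be carried out explicitly rather than invoked.
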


Now we prove Theorem \ref{ohsawa}.

\begin{proof}
We take a complete \kah form $\omega'$ on $Y \coloneqq X \setminus Z$ as in Section 4.
The proof of Theorem \ref{ohsawa} is similar to \cite[Theorem 3.1]{Oh88} and Theorem \ref{cohomology} with a slight modification.

Let $\mathcal{U} = \{ U_j \}_{j \in \Lambda}$ be a finite Stein cover of $W$.
By Theorem \ref{coh} and the Grauert direct image theorem, the sheaf cohomology $H^q( W  , \pi_{*} (K_X \otimes E(h) ) ) $ is
isomorphic to the \v{C}ech cohomology $H^q( \mathcal{U} , \pi_{*} (K_X \otimes E(h) ) ) $.
We point out the following claim.
\begin{claim}\cite[Lemma 3.2]{Oh88}
\label{ohsawalem}
For any form $g$ on $W$, 
$| \pi^{*}g (x)|_{ \omega + \pi^{*}\sigma} \le | g (\pi(x) )|_{ \sigma}  $ holds at any $x \in X$.
\end{claim}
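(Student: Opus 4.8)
The plan is to reduce Claim \ref{ohsawalem} to an elementary fact of Hermitian linear algebra, applied at each point of $X$. Fix $x\in X$ and set $y=\pi(x)$. Since the asserted inequality only involves the values of forms at the single points $x$ and $y$, the statement is local; shrinking a neighbourhood of $y$ and, in the case where $W$ is singular, composing with a local embedding $W\hookrightarrow\mathbb{C}^{N}$ and replacing $\sigma$ by the ambient \kah form (which alters neither $\pi^{*}\sigma$, nor $\pi^{*}g$, nor $|g|_{\sigma}(y)$), I may assume that everything takes place between complex manifolds. Then $d\pi_x\colon T_xX\to T_yW$ is a $\mathbb{C}$-linear map of finite dimensional Hermitian spaces, where $T_xX$ carries $(\omega+\pi^{*}\sigma)(x)$ and $T_yW$ carries $\sigma(y)$.

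The first step is to observe that $d\pi_x$ is norm non-increasing: for $v\in T_xX$,
\[
|d\pi_x(v)|^{2}_{\sigma}=\sigma\bigl(d\pi_x(v),\overline{d\pi_x(v)}\bigr)=(\pi^{*}\sigma)(v,\bar v)\le(\omega+\pi^{*}\sigma)(v,\bar v)=|v|^{2}_{\omega+\pi^{*}\sigma},
\]
the inequality being nothing but $\omega\ge 0$. Passing to adjoints, the transpose $(d\pi_x)^{t}\colon T^{*}_yW\to T^{*}_xX$ — which on $1$-forms is precisely the pullback $\pi^{*}$ — is again norm non-increasing for the dual Hermitian norms, since a linear map of Hermitian spaces and its adjoint have the same operator norm.

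The second step is the linear-algebra input: if $B\colon(V_1,g_1)\to(V_2,g_2)$ is a linear map of Hermitian spaces with $\|B\|\le 1$, then $\Lambda^{p}B$, the conjugate $\overline{\Lambda^{q}B}$, and their tensor product all have operator norm $\le 1$ for the induced metrics. I would deduce this from the singular value decomposition $B=U\Sigma V^{*}$: the singular values of $\Lambda^{p}B$ are the products $s_{i_1}\cdots s_{i_p}$ of singular values of $B$, hence $\le 1$, and the operator norm of a tensor product is the product of the operator norms. Since the pullback $\pi^{*}$ on $(p,q)$-forms is exactly $\Lambda^{p}(d\pi_x)^{t}\otimes\overline{\Lambda^{q}(d\pi_x)^{t}}$, and the metric on $\Lambda^{p,q}T^{*}$ is the corresponding tensor product metric, combining the two steps yields $|\pi^{*}\eta|_{\omega+\pi^{*}\sigma}(x)\le|\eta|_{\sigma}(y)$ for every form $\eta$ on $W$; taking $\eta=g$ is the claim.

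I do not expect a serious obstacle here: the content is entirely pointwise and linear-algebraic, the curvature hypotheses of Theorem \ref{ohsawa} playing no role, and the only point needing a little care is making sense of $\pi^{*}\sigma$ and of $|g|_{\sigma}$ when $W$ is a genuine analytic space — which is exactly why I would perform the local-embedding reduction at the outset. Alternatively, one may simply invoke \cite[Lemma 3.2]{Oh88} verbatim.
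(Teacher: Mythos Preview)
Your argument is correct. The pointwise reduction to the contraction property of $d\pi_x$ with respect to $\omega+\pi^{*}\sigma$ and $\sigma$, followed by the passage to duals, exterior powers, and tensor products via singular values, is exactly the linear-algebraic content behind the inequality, and your handling of the case where $W$ is singular via a local embedding is the standard device.

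Note, however, that the paper does not supply its own proof of this claim at all: it merely states it and attributes it to \cite[Lemma 3.2]{Oh88}, using it as a black box in the subsequent estimates. So your write-up is strictly more than what the paper does; your final sentence (``one may simply invoke \cite[Lemma 3.2]{Oh88} verbatim'') is in fact the paper's entire treatment. What you gain by spelling it out is self-containment and a transparent reason why the inequality holds for forms of arbitrary bidegree, which is indeed how the claim is applied to the $(n,q)$-forms $\bar\partial b_{i_0}$ in the proof of Theorem \ref{ohsawa}.
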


We fix $c = \{ c_{i_{0} i_{1} \dots i_{q} }\} \in H^q( \mathcal{U} , \pi_{*} (K_X \otimes E(h) ) )$.
By the definition of \v{C}ech cohomology, we have 
\begin{enumerate}
\item $c_{i_{0} i_{1} \dots i_{q} } \in H^0( U_{i_{0} i_{1} \dots i_{q} } , \pi_{*} (K_X \otimes E(h) ) )  = H^0( \pi^{-1} (U_{i_{0} i_{1} \dots i_{q} } ), K_X \otimes E(h) ) $ and  
\item $ \delta c \coloneqq \sum_{k = 0}^{q+1} (-1)^k c_{ i_{0} i_{1} \dots \check{i_k} \dots i_{q+1} }    |_{ \pi^{-1} (U_{i_{0} i_{1} \dots i_{q+1}} ) }=0$.
\end{enumerate}

Let $\{ \rho_j\}_{j \in \Lambda} $ be a partition of unity of $\mathcal{U} $.
Based on Section 4,
we define an $E$-valued form $b_{i_{0} i_{1} \dots i_{k} } $ inductively by 
$$
b_{i_{0} i_{1} \dots i_{q-1} } \coloneqq \sum_{j \in \Lambda} \pi^{*}(\rho_{j}) c_{j i_{0} i_{1} \dots i_{q-1} } 
 \mbox{\,\, and \,\,} 
b_{i_{0} i_{1} \dots i_{k} } \coloneqq \sum_{j \in \Lambda} \pi^{*}( \rho_{j} )
\bar{\partial} b_{j i_{0} i_{1} \dots i_{k} } .
$$
We point out 
 $$
 \delta \{ b_{i_{0} i_{1} \dots i_{q-1} }  \} = c,
 \, \,
\delta \{ \bar{\partial} b_{i_{0} i_{1} \dots i_{q-1} }  \}  =0, 
\mbox{\,\, and \,\,}
\delta \{ b_{i_{0} i_{1} \dots i_{k} }  \}  = \{ \bar{\partial} b_{i_{0} i_{1} \dots i_{k+1} }  \}
$$
hold.

Therefore we obtain $\bar{\partial} b_{i_0} |_{\pi^{-1}(U_{i_0}) \setminus Z} $,
which is an $E$-valued $(n , q)$
$\bar{\partial}$-closed form on $\pi^{-1}(U_{i_0}) \setminus Z$.
By Claim \ref{ohsawalem}, $|\bar{\partial} (\pi^{*} \rho_{j} )|_{\epsilon \omega + \pi^{*} \sigma}$ are bounded above by $|\bar{\partial} (\rho_{j} )|_{\sigma}$ for any $\epsilon > 0$ and $| c_{i_{0} i_{1} \dots i_{q} } |^{2}_{\epsilon \omega + \pi^{*} \sigma} dV_{\epsilon \omega + \pi^{*} \sigma}$ are independent of $\epsilon$ by Claim \ref{inequality}.
Therefore we have
$\delta \{ \bar{\partial} b_{i_0} \}= 0 $ and
\begin{align*}
\int_{\pi^{-1}(U_{i_0}) \setminus Z } | \bar{\partial} b_{i_0} |^2_{\epsilon \omega' + \pi^{*} \sigma  , h } dV_{ \epsilon \omega' + \pi^{*} \sigma }  
&\le
\int_{\pi^{-1}(U_{i_0})  } | \bar{\partial} b_{i_0} |^2_{\epsilon \omega + \pi^{*} \sigma  , h } 
dV_{ \epsilon \omega + \pi^{*} \sigma} \\
&\le
\lim_{\epsilon \downarrow 0}\int_{\pi^{-1}(U_{i_0}) } | \bar{\partial} b_{i_0} |^2_{\epsilon \omega + \pi^{*} \sigma , h } 
dV_{ \epsilon \omega + \pi^{*} \sigma} \\
&<
+\infty
\end{align*} 
for any $\epsilon > 0 $ by Claim \ref{inequality}.
Thus, we may regard $ \{ \bar{\partial} b_{i _{0}} \}$ as an element of 
$L^{2}_{ n , q }( Y , E )_{ \sigma , h}$ and denote by $b \coloneqq \bar{\partial} b_{i _{0}}$.
By Theorem \ref{ohsawal2}, there exists $a \in L^{2}_{ n , q-1 }( Y , E )_{ \sigma , h}$
such that 
$$
\bar{\partial} a = b
 \mbox{\,\, and \,\,} 
\lim_{\epsilon \downarrow 0} \int_{Y \setminus Z  } | a |^2_{\epsilon \omega' + \pi^{*} \sigma , h } dV_{ \epsilon \omega' + \pi^{*} \sigma   }  < + \infty.
$$
Write $d^{1}_{i_{0}} \coloneqq b_{i_{0}} - a \in L^{2}_{ n , q-1 }(\pi^{-1}(U_{i_{0}}) \setminus Z , E )_{ \sigma , h}$
and $d^1 \coloneqq \{d^{1}_{i_{0}} \}$.
We point out 
$$
 \delta d^1 = \delta\{ b_{i_{0}}\} =\{ \bar{\partial} b_{ i_{0} i_{1} } \}
  \mbox{\,\, and \,\,} 
\bar{\partial} d^1  = 0.
 $$
By Theorem \ref{ohsawal2}, there exists $a_{i_{0}} \in L^{2}_{ n , q-2}( \pi^{-1}(U_{i_{0}}) \setminus Z  , E )_{ \sigma , h}$ such that 
$$
\bar{\partial} a_{i_{0}} =   d^{1}_{i_{0}} 
 \mbox{\,\, and \,\,} 
\lim_{\epsilon \downarrow 0} \int_{U_{i_{0}} \setminus Z  } | a_i |^2_{\epsilon \omega' + \pi^{*} \sigma , h } dV_{ \epsilon \omega' + \pi^{*} \sigma   }  < + \infty.
$$
Write 
$d^{2}_{i_{0}i_{1}} \coloneqq b_{i_{0} i_{1}} - a_{i_{0}} + a_{i_{1}} \in L^{2}_{ n , q-2}(\pi^{-1}(U_{i_{0} i_{1}} ) \setminus Z , E )_{ \sigma , h}$
and $d^2 \coloneqq \{d^{2}_{i_{0}i_{1}} \}$.
We point out 
$$
 \delta d^2= \delta\{b _{i_{0} i_{1}}\}=\{ \bar{\partial} b_{ i_{0} i_{1}i_{2} } \}
  \mbox{\,\, and \,\,} 
\bar{\partial} d^2  = 0.
$$
By repeating this procedure, we obtain 
$d^{q-1} _{i_{0} i_{1} \dots i_{q-1} } \in L^{2}_{ n , 0 }(\pi^{-1}(U _{i_{0} i_{1} \dots i_{q-1} } ) \setminus Z , E )_{ \sigma , h} $
and $d^{q-1} \coloneqq \{d^{q-1}_{i_{0} i_{1} \dots i_{q-1} } \}$ such that
 $$
 \delta d^{q-1} = \delta \{ b_{i_{0} i_{1} \dots i_{q-1} } \}=c
  \mbox{\,\, and \,\,} 
\bar{\partial} d^{q-1} = 0.
$$
We have
\begin{align*}
\int_{\pi^{-1}(U  _{i_{0} i_{1} \dots i_{q-1} } ) \setminus Z} | d^{q-1} _{i_{0} i_{1} \dots i_{q-1} } |^2_{\omega  , h } dV_{ \omega}  
&=
\int_{ \pi^{-1}(U _{i_{0} i_{1} \dots i_{q-1} } ) \setminus Z} | d^{q-1} _{i_{0} i_{1} \dots i_{q-1} } |^2_{\ \omega' +\pi^{*} \sigma , h } dV_{  \omega' + \pi^{*} \sigma  }  \\
&=
\lim_{\epsilon \downarrow 0} \int_{ \pi^{-1}(U _{i_{0} i_{1} \dots i_{q-1} } ) \setminus Z} | d^{q-1} _{i_{0} i_{1} \dots i_{q-1} } |^2_{\epsilon \omega' + \pi^{*} \sigma  , h } dV_{ \epsilon \omega' + \pi^{*} \sigma  }  \\
& < + \infty.
\end{align*}
By Lemma \ref{keylemma} and the Riemann extension theorem, 
$d^{q-1}_{i_{0} i_{1} \dots i_{q-1} } $ extends on 
 $\pi^{-1}(U  _{i_{0} i_{1} \dots i_{q-1} } ) $ and $d^{q-1}_{i_{0} i_{1} \dots i_{q-1} } $ is holomorphic on $\pi^{-1}(U  _{i_{0} i_{1} \dots i_{q-1} } ) $.
Therefore we obtain 
$d^{q-1} _{i_{0} i_{1} \dots i_{q-1} } \in H^0( \pi^{-1} (U_{i_{0} i_{1} \dots i_{q-1} } ), K_X \otimes E(h) ) $
and 
$ \delta d^{q-1} = c$,
which completes the proof.
\end{proof}

\begin{rem}
We ask whether under the assumptions of singular Hermitian metrics as in Theorem \ref{vanishing} - \ref{ohsawa}, we can show higher rank analogies of a generalization of Koll\'ar-Ohsawa type vanishing theorem by Matsumura \cite{Mat16}, an injectivity theorem of 
higher direct images by Fujino \cite{Fuj2},
an injectivity theorem of pseudoeffective line bundles by Fujino and Matsumura \cite{FM}
and so on.
It is likely the answer is "Yes" and the proof may be similar to the original proof with a slight modification.
\end{rem}

\color{black}



\begin{thebibliography}{n}
\bibitem[BKKMSU]{BKKMSU}
T. Bauer, S. J Kov\'acs, A. K\"uronya, E. C. Mistretta, T. Szemberg, S. Urbinati.
\textit{On positivity and base loci of vector bundles.}
 Eur. J. Math.  
 {\bf 1}  (2015),  no. 2, 229--249.
 
\bibitem[Ber]{Ber07}
B. Berndtsson. 
\textit{ Curvature of vector bundles associated to holomorphic fibrations.}
 Ann. of Math. (2)  
 {\bf 169}  (2009),  no. 2, 531--560.
 
\bibitem[BP]{BP}
B. Berndtsson, M. P\u{a}un.
\textit{Bergman kernels and the pseudoeffectivity of relative canonical bundles.}
 Duke Math. J. 
 {\bf 145}  (2008),  no. 2, 341--378.
 
\bibitem[CP]{CP}
J. Cao, M. P\u{a}un.
\textit{Kodaira dimension of algebraic fiber spaces over abelian varieties.}
 Invent. Math.  
 {\bf 207}  (2017),  no. 1, 345--387.
 
 \bibitem[deC]{Cat98}
M. A. A. de Cataldo. 
\textit{Singular Hermitian metrics on vector bundles.}
 J. Reine Angew. Math.  
 {\bf 502}  (1998), 93--122.
 
\bibitem[Dem 82]{Dem82}
J.-P. Demailly.
\textit{ Estimations $L^2 $ pour l'op\'erateur $\bar{\partial}$ d'un fibr\'e vectoriel holomorphe semi-positif au-dessus d'une vari\'et\'e k\"ahl\'erienne compl\'ete.}
Ann. Sci. \'Ecole Norm. Sup. (4)  {\bf 15}  (1982),  no. 3, 457--511.

\bibitem[Dem]{Dem} 
J.-P. Demailly.
\textit{Complex analytic and differential geometry.}
https://www-fourier.ujf-grenoble.fr/~demailly/manuscripts/agbook.pdf

\bibitem[Dem 12]{Dem2}
J.-P. Demailly.
\textit{Analytic methods in algebraic geometry.}
Surveys of Modern Mathematics.
{\bf  1} 
(2012), 
viii+231.

\bibitem[Fuj12]{Fuj07}
O. Fujino.
\textit{A transcendental approach to Koll\'ar's injectivity theorem.}
 Osaka J. Math.  
 {\bf 49}  (2012),  no. 3, 833--852.
 
 \bibitem[Fuj13]{Fuj2}
 O. Fujino.
 \textit{ A transcendental approach to Koll\'ar's injectivity theorem II.}
 J. Reine Angew. Math.  
 {\bf 681} 
  (2013), 149--174.

\bibitem[FM]{FM}
O. Fujino, S. Matsumura.
\textit {Injectivity theorem for pseudo-effective line bundles and its applications.}
arXiv:1605.02284v

 
 
\bibitem[Gri]{Gri}
P. A. Griffiths. 
\textit{Hermitian differential geometry, Chern classes, and positive vector
 bundles.}
 1969  Global Analysis (Papers in Honor of K. Kodaira) 
 pp. 185--251 Univ. Tokyo Press, Tokyo 
 
\bibitem[GR]{GR}
H. Grauert, R. Remmert.
\textit{Coherent analytic sheaves.}
{\bf265}  Springer-Verlag, Berlin,  1984. {\rm xviii}+249 pp. ISBN: 3-540-13178-7 

\bibitem[HPS]{HPS17}
C. Hacon, M. Popa, C. Schnell.
\textit{Algebraic fiber spaces over abelian varieties : around a recent theorem by Cao and P\u{a}un.}
arXiv:1611.08768v2

\bibitem[Hos]{Hos17}
G. Hosono.
\textit{Approximations and examples of singular Hermitian metrics on vector bundles.}
 Ark. Mat.  {\bf 55}  (2017),  no. 1, 131--153.

\bibitem[Mat]{Mat16}
S. Matsumura.
\textit{A vanishing theorem of Koll\'ar-Ohsawa type.}
 Math. Ann. 
 {\bf 366}
(2016),  no. 3-4, 1451--1465.


\bibitem[Ohs]{Oh88}
 T. Ohsawa. 
 {\textit Vanishing theorems on complete \kah manifolds.}
 Publ. Res. Inst. Math. Sci.  {\bf 20}  (1984),  no. 1, 21--38.


\bibitem[Paun]{Paun16} 
M. P\u{a}un.
\textit{Singular Hermitian metrics and positivity of direct images of pluricanonical bundles.}
arXiv:1606.00174v1

\bibitem[PT]{PT14}
M. P\u{a}un, S. Takayama.
\textit{Positivity of twisted relative pluricanonical bundles and their direct images.}
arXiv:1409.5504v1 
to appear in J.Algebraic Geom.

\bibitem[Rau]{Raufi}
H. Raufi.
\textit{Singular hermitian metrics on holomorphic vector bundles.}
 Ark. Mat.  
 {\bf 53}
   (2015),  no. 2, 359--382.


\end{thebibliography}
\end{document}